\documentclass[a4paper, 12pt, titlepage, fleqn]{article}
\usepackage{times}
\usepackage{amsthm}
\usepackage{xcolor}
\usepackage{enumitem}
\newtheorem{theorem}{Theorem}
\newtheorem{corollary}{Corollary}
\newtheorem{lemma}{Lemma}
\newtheorem{proposition}{Proposition}

\theoremstyle{definition}

\newtheorem{remark}{Remark}
\newtheorem{example}{Example}
\setlength{\textwidth}{15cm}
\setlength{\oddsidemargin}{0cm}
\setlength{\evensidemargin}{0cm}
\setlength{\voffset}{0cm}
\setlength{\hoffset}{0.5cm}
\setlength{\topmargin}{1cm}
\setlength{\headheight}{0cm}
\setlength{\headsep}{0cm}
\setlength{\textheight}{20.7cm}
\setlength{\footskip}{2.3cm}
\setlength{\parindent}{0pt}

\usepackage{amssymb,amsmath}

\newcommand{\FF}{\mathbb{F}}

\newcommand{\Fq}{\mathbb{F}_q}
\newcommand{\Fqn}{\mathbb{F}_{q^n}}

\newcommand{\G}{\mathcal G}


\def\F{\mathbb{F}}

\def\Fq{{\mathbb{F}}_q}

\def\Aut{\mathrm{Aut}}

\def\dim{\mathrm{dim}}

\def\L{\mathcal{L}}

\def\Tr{\mathrm{Tr}}
\def\End{\mathrm{End}}
\def\tr{\mathrm{tr}}

\def\nul{\mathrm{null}}

\DeclareMathOperator{\modr}{~mod_r}

\DeclareMathOperator{\gcrc}{gcrc}
\DeclareMathOperator{\lclc}{lclc}

\newcommand{\npmatrix}[1]{\left( \begin{matrix} #1 \end{matrix} \right)}

\newcommand{\rank}{\mathrm{rank}}

\begin{document}
\baselineskip=16.3pt
\parskip=14pt
\begin{center}
\section*{A Characterization of the Number of Roots  of  Linearized and Projective Polynomials
in the Field of Coefficients}

{\large 
Gary McGuire\footnote{Research supported by Science
Foundation Ireland Grant 13/IA/1914.} and John Sheekey 
 \\
School of Mathematics and Statistics\\
University College Dublin\\
Ireland}
\end{center}

\subsection*{Abstract}

A fundamental problem in the theory of linearized and projective polynomials over finite fields
is to characterize the number of roots in the coefficient field directly from
the coefficients. We prove results of this type, of a recursive nature.
These results follow from our main theorem which 
characterizes the number of roots using the rank of a matrix that is
smaller than the Dickson matrix.



\section{Introduction}

Linearized polynomials with coefficients in a finite field $\Fqn$ arise in many different
problems and contexts. 
A fundamental problem in the theory of linearized polynomials is
to characterize precisely the number of roots in $\Fqn$
in terms of the coefficients of the given polynomial.

More precisely, let 
$L(x)=a_0x+a_1x^q+a_2 x^{q^2}+\cdots+a_d x^{q^d}$
be a $q$-linearized polynomial with coefficients in $\Fqn$.
The roots of $L(x)$ that lie in the field $\Fqn$ form an $\Fq$-vector space,
which can have dimension anywhere between 0 and $d$.
The fundamental problem of linearized polynomials is to 
somehow determine
the dimension of this $\Fq$-vector space directly from the coefficients 
$a_0, a_1, \ldots ,a_d$. 
The ultimate goal is to find necessary and sufficient conditions on the $a_i$ that tell when
the dimension is equal to $k$, for each $k$ between 0 and $d$.
In this paper we present results of this type, and our approach
will generate further such results.
Here is one example of our results (see Theorem \ref{degree2alln}
for the full statement which covers all possibilities).

\begin{theorem}
Let $L(x) = ax+bx^q+cx^{q^2}\in \Fqn[x]$. Then 
$L(x)$ has $q^2$ roots in $\Fqn$ if and only if $G_{n-1}=0$ and $N(b/c)G_n = 1$,
where the quantities $G_n$ and $G_{n-1}$ are calculated directly from the coefficients
$a,b,c,$ via a recursion, and
$N$ is the norm from $\mathbb{F}_{q^n}$ to   $\mathbb{F}_{q}$.
\end{theorem}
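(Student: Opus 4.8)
The plan is to reduce the statement to a divisibility criterion for skew polynomials, which is then controlled precisely by the recursion in the theorem. First I would handle the degenerate cases. If $c = 0$ then $L$ has $q$-degree at most $1$, so its root space has dimension at most $1$ and $q^2$ roots is impossible. If $a = 0$ then $L(x) = \bigl(b^{1/q}x + c^{1/q}x^{q}\bigr)^{q}$, so $L$ has the same roots as a polynomial of $q$-degree $1$ and again $q^2$ roots cannot occur. If $b = 0$ then $L$ is a linearized binomial, whose root count in $\mathbb{F}_{q^n}$ is governed by classical norm and order conditions and is treated separately (this is consistent with $N(b/c)$ appearing in the statement, which implicitly forces $b \neq 0$). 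So we may assume $a,b,c$ are all nonzero. Then $L'(x) = a \neq 0$, hence $L$ is separable and its root space $W$ over $\overline{\mathbb{F}_q}$ is an $\mathbb{F}_q$-subspace of dimension exactly $2$; moreover $L(x) = c\,M(x)$ where $M(x) = x^{q^2} + \beta x^{q} + \gamma x$ with $\beta = b/c$ and $\gamma = a/c$. The roots of $L$ in $\mathbb{F}_{q^n}$ are exactly the elements of $W$, so $L$ has $q^2$ roots there iff $W \subseteq \mathbb{F}_{q^n}$, iff $M(x) \mid x^{q^n} - x$, iff the skew polynomial $t^2 + \beta t + \gamma$ right-divides $t^n - 1$ in $\mathbb{F}_{q^n}[t;\sigma]$, where $\sigma$ is the $q$-Frobenius.

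Next I would compute the right-remainder of $t^n - 1$ modulo $t^2 + \beta t + \gamma$. Writing the right-remainder of $t^{k}$ as $u_k + v_k\,t$, one has $(u_0,v_0) = (1,0)$, $(u_1,v_1) = (0,1)$, and from $t^{k+2} = t^{k}(t^2+\beta t+\gamma) - \beta^{q^{k}} t^{k+1} - \gamma^{q^{k}} t^{k}$ one reads off the single second-order recursion $f_{k+2} = -\beta^{q^{k}} f_{k+1} - \gamma^{q^{k}} f_{k}$, satisfied by both $u_k$ and $v_k$. Hence the right-remainder of $t^n - 1$ is $(u_n - 1) + v_n t$, and $L$ has $q^2$ roots in $\mathbb{F}_{q^n}$ exactly when $u_n = 1$ and $v_n = 0$; once this holds, the relation $t^{n+1} \equiv t$ forces $u_{n+1} = 0$ and $v_{n+1} = 1$, so no further condition appears — equivalently, the product of the $n$ companion matrices of the recursion equals the identity. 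It then remains to match $u_k, v_k$ with the $G_k$ of the theorem: $v_n = 0$ is the condition $G_{n-1} = 0$, and $u_n = 1$ is the condition $N(b/c)\,G_n = 1$, the factor $N(b/c) = \prod_{j=0}^{n-1}\beta^{q^{j}}$ being the accumulated normalization that the recursion for $G_k$ carries relative to the bare remainder coefficients once the denominators in $a,b,c$ are cleared (it records the bookkeeping of leading coefficients along the division).

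An alternative — and, per the introduction, the route of the paper — is to invoke the main theorem directly: specialized to $d = 2$ it expresses the number of roots as $q^{2-r}$, where $r$ is the rank of a $2 \times 2$ matrix built from the analogous recursion, so $q^2$ roots means that matrix is the zero matrix; unpacking its four scalar entries and using the collapse just noted reduces these to the two displayed conditions.

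I expect the genuinely delicate point to be not the conceptual reduction — exactness of the skew-polynomial division is just the event that a second-order recursion returns to its initial data after $n$ steps — but the bookkeeping in the last step: fixing the precise normalization that identifies the explicit remainder coefficients (or the entries of the $2\times 2$ companion product) with the $G_k$ of the theorem, so that $N(b/c)$ appears with exactly the right exponent and placement, and checking cleanly that the degenerate cases $c = 0$, $a = 0$, and $b = 0$ are accounted for.
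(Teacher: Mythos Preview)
Your approach is correct and is essentially the paper's, recast in skew-polynomial language. The paper's Theorem~5 is exactly your statement that $L$ has $q^2$ roots iff the product $A_L = C_L C_L^\sigma\cdots C_L^{\sigma^{n-1}}$ of companion matrices equals $I$, and the paper then computes $A_L$ via a factorisation $C_k = X\,Y_k\,Z_k$ with explicit diagonal matrices $X=\mathrm{diag}(a/b,1)$ and $Z_k=\mathrm{diag}\bigl((b/c)^{[k-1]},(b/c)^{[k]}\bigr)$. This factorisation \emph{is} the normalisation you flag as the delicate last step: it converts your raw recursion $f_{k+2}=-\beta^{q^k}f_{k+1}-\gamma^{q^k}f_k$ (with $\beta=b/c$, $\gamma=a/c$) into the one-variable recursion $G_k+G_{k-1}+u^{\sigma^{k-2}}G_{k-2}=0$ in $u=a^\sigma c/b^{\sigma+1}$, and the factor $N(b/c)$ appears precisely as $(b/c)^{[n]}$ coming from $Z_n$.

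One small bookkeeping correction. In the paper's explicit form of $A_L$ (Proposition~4), the entries proportional to $G_{n-1}^\sigma$ and $G_n$ sit in the \emph{second} column, so the theorem's two conditions say $(u_{n+1},v_{n+1})=(0,1)$ in your notation, not $(u_n,v_n)=(1,0)$; the $(1,1)$ entry involves $G_{n-2}$, not $G_n$. Your collapse argument $t^n\equiv 1\Rightarrow t^{n+1}\equiv t$ by left-multiplying by $t$ is correct and handles one direction; the paper instead uses the two $G$-recursions at $k=n$ to show that $G_{n-1}=0$ forces $G_n\in\FF_{q^2}$ and $A_L=N(b/c)\,\mathrm{diag}(G_n^\sigma,G_n)$, whereupon $N(b/c)G_n=1$ puts $G_n\in\Fq$ and gives $A_L=I$. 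The reverse direction you need, $t^{n+1}\equiv t\Rightarrow t^n\equiv 1$, also holds but uses $\gamma=a/c\ne 0$ (so that $t$ and $t^2+\beta t+\gamma$ have trivial common right component), which you have already assumed.
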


For example, when $n=5$ this theorem becomes the following.

\begin{theorem}
Let $L(x) = ax+bx^q+cx^{q^2}\in \mathbb{F}_{q^5} [x]$. 
Let $u = \frac{a^q c}{b^{q+1}}$.
Then 
$L(x)$ has $q^2$ roots in $\mathbb{F}_{q^5}$ if and only if $b\ne 0$,
$(1-u)^{q^2+1}-u^q=0$, and $N(1-u)= N(c/b)$,
where $N$ is the norm from $\mathbb{F}_{q^5}$ to   $\mathbb{F}_{q}$.
\end{theorem}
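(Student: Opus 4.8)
The plan is to derive this statement directly from the general degree-two result (Theorem~\ref{degree2alln}, summarised as the first theorem above) by specialising its recursion to $n=5$ and simplifying, so the work is almost entirely algebraic. First I would dispose of the degenerate coefficients. If $b=0$ (or $c=0$) then $L$ is, after a Frobenius twist, a polynomial whose nonzero $\Fqn$-roots are governed by an equation of the form $x^{q^2-1}=\text{const}$ or $x^{q-1}=\text{const}$; since $\gcd(2,5)=1$ the number of such roots is at most $q<q^2$, so $L$ cannot have exactly $q^2$ roots. On the other hand $c=0$ forces $u=0$, for which $(1-u)^{q^2+1}-u^q=1\ne 0$, so the right-hand conditions are false as well, and $b=0$ makes ``$b\ne 0$'' fail outright. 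Hence both sides of the asserted equivalence are false in these cases, and one may assume $a,b,c$ are all nonzero for the remainder.

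Next I would explain why $u=a^qc/b^{q+1}$ is the natural variable: the number of $\Fqn$-roots of $L$ is unchanged under replacing $L$ by $\beta\circ L\circ\gamma$ with $\beta,\gamma$ invertible $\Fq$-linear, in particular under $(a,b,c)\mapsto(\lambda\mu a,\lambda\mu^q b,\lambda\mu^{q^2}c)$ for $\lambda,\mu\in\Fqn^\times$, and $u$ is invariant under this action; a second invariant is $N(a/b)$, and since $N(u)=N(a)N(c)/N(b)^2$ one has $N(c/b)=N(u)/N(a/b)$, so the pair $(u,N(c/b))$ records exactly the two invariants of the situation. This is why the answer can be phrased using only these quantities. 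With $a,b,c\neq 0$ in hand, the main step is then to run the degree-two recursion of Theorem~\ref{degree2alln} for five steps, that is, to compute $G_4$ and $G_5$ explicitly and substitute $a^qc=u\,b^{q+1}$ throughout. Equivalently — and this is the concrete handle I would use for the computation — one writes $M\in\GL_2(\Fqn)$ for the product of the five companion matrices of the recurrence $c^{q^i}z_{i+2}+b^{q^i}z_{i+1}+a^{q^i}z_i=0$; by the Dickson-matrix argument underlying the paper's main theorem one has $\dim_{\Fq}\ker L=\dim_{\Fqn}\ker(M-I)$, so $L$ has $q^2$ roots iff $M=I$. Expanding $M$ (or the two fundamental solutions of the recurrence) and using $a^qc=u\,b^{q+1}$, the claim is that the vanishing of the off-diagonal part collapses to $(1-u)^{q^2+1}-u^q=0$ — the $n=5$ instance of $G_{n-1}=0$ — and that, modulo this, the diagonal entries equal $1$ precisely when $N(1-u)=N(c/b)$, which is the $n=5$ instance of $N(b/c)G_n=1$.

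The only genuine obstacle is the bookkeeping in this five-fold expansion, and the point is the interplay of Frobenius twists: $u^{q^i}$ runs through a $5$-cycle, $1-u=(b^{q+1}-a^qc)/b^{q+1}$ plays the role of the ``discriminant'', and the exponent pattern $q^2+1$ (respectively the norm exponent $1+q+q^2+q^3+q^4$) is exactly what the degree-$2$, length-$5$ structure forces out. One must also be careful that the result is truly an ``if and only if'': the single equation $(1-u)^{q^2+1}-u^q=0$ only says that $M$ has eigenvalue $1$, i.e.\ that $L$ has \emph{at least} $q$ roots, and it is the norm condition $N(1-u)=N(c/b)$ that rules out the remaining possibility — a nontrivial Jordan block for $M$, corresponding to exactly $q$ roots — and hence forces $M=I$. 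Finally I would check consistency at the boundary $a=0$ (where $u=0$, so both sides are false, as already noted), which closes the case analysis.
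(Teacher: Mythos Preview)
Your approach is essentially the paper's own: the statement in the introduction is nothing more than Theorem~\ref{degree2alln}(i) specialised to $n=5$, using the table of $G_k$'s (so $G_4=(1-u)^{\sigma^2+1}-u^\sigma$) together with the simplification $G_5=N(1-u)$ once $G_4=0$. Your handling of the degenerate cases $a=0$, $b=0$, $c=0$ is correct and more explicit than the paper, which simply works under the standing hypothesis $ac\neq 0$ and records $b\neq 0$ as part of the statement.

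One point in your final paragraph is muddled and worth correcting. The condition $(1-u)^{q^2+1}-u^q=0$, i.e.\ $G_{n-1}=0$, is \emph{not} the condition that $A_L$ has eigenvalue~$1$; by Proposition~\ref{prop:ALmat} it is exactly the condition that the off-diagonal entries of $A_L$ vanish, so that $A_L$ is diagonal. A diagonal matrix has no nontrivial Jordan block, so there is no such case to rule out. What the norm condition $N(b/c)G_n=1$ then does is force one diagonal entry to equal~$1$; since $N(b/c)\in\Fq$, this forces $G_n\in\Fq$, hence $G_n^\sigma=G_n$, and both diagonal entries are~$1$. (The condition ``$A_L$ has eigenvalue $1$'' is instead $\chi_L(1)=0$, which is the criterion in part~(ii) of Theorem~\ref{degree2alln}.) This does not affect the correctness of your plan, only the narrative around it. You should also actually carry out, or at least sketch, the reduction of $G_5$ to $N(1-u)$ under the hypothesis $(1-u)^{\sigma^2+1}=u^\sigma$; you assert it but do not indicate how, and that identity is the only nontrivial content beyond quoting Theorem~\ref{degree2alln}.
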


We have several results of this flavour, which are presented in 
Sections \ref{sectd2} and \ref{sectd3}.
Our proofs use a relationship between the number of roots (or the nullity) of $L$ 
in $\Fqn$ and the rank of
a matrix we denote $A_L$, which can be calculated explicitly from the coefficients of $L$.
This relationship is established in Theorem \ref{thm:nullAL} in Section \ref{sectmain}
and is the main result of the paper.
It is known that the nullity of a linearized polynomial is equal to the nullity of a Dickson matrix
(see \cite{LN}).
However, the Dickson matrix can be difficult to calculate and work with.
In this paper we show that 
the nullity of a linearized polynomial is equal to the nullity of 
$A_L-I$, a matrix which is different to and smaller than the Dickson matrix.

An interesting feature of our results is that they are recursive in $n$, the degree of the extension
of $\Fq$.  This means that all the expressions in the coefficients for $\Fqn$ that occur
in our investigations 
(for example, the entries of $A_L$) will also
be used for $\F_{q^{n+1}}$ and higher degree extensions.

Alongside linearized polynomials we also consider projective polynomials,
which are polynomials of the form 
\[
\sum_{i=0}^d a_i y^{\frac{q^i-1}{q-1}}, \quad a_i\in \Fqn.
\] 
We will prove similar results for projective polynomials, giving necessary and
sufficient conditions on the coefficients for each possibility for
the number of roots in $\Fqn$.
The number of roots of a projective polynomial is related to the eigenvectors of 
our matrix $A_L$, see Section \ref{sectmain}.

A polynomial in $\Fqn [x]$ is called a \emph{permutation polynomial} if it induces
a bijective function on $\Fqn$.
A linearized polynomial is a permutation polynomial if and only if its only root in $\Fqn$ is 0.
Therefore, giving if and only if conditions on the coefficients of a
 linearized  polynomial to be a permutation polynomial can be seen as a special case of our results,
namely, the case when the dimension of the vector space of roots in $\Fqn$ is 0.
For a detailed statement about $ax+bx^q+cx^{q^2}$ see Theorem \ref{degree2alln}
and for $ax+bx^q+cx^{q^2}+dx^{q^3}$ 
see Theorem \ref{d3split3}.

\bigskip

Listing all previous work on this topic is impossible in a few paragraphs.
We give a few references here, and others throughout the paper.
Abhyankar \cite{Abhyankar} studied projective polynomials for a different reason
(realizing Galois groups).
Bluher \cite{Bluher} studied projective polynomials $x^{q+1}+ax+b$ and showed that 
the number of roots in the ground field is highly restricted.
Helleseth and Kholosha \cite{HellKhol} studied certain projective and linearized polynomials
in even characteristic, and gave a criteria for the number of roots in 
terms of the coefficients. Our results generalize and extend theirs
to all characteristics, and to higher degree.
The number of roots of a projective polynomial was related to the eigenvectors of a certain matrix by von zur Gathen-Giesbrecht-Ziegler \cite{VoGiZi}, however their paper
was not constructive, and does not give criteria based on the coefficients.

The paper is laid out as follows. In Section 2 we give preliminaries
and background for the paper.
Then in Section \ref{sectmain} we present our main results
relating the matrix $A_L$ to the number of roots.
Section \ref{sectd2} applies the main results to the case
of degree $q^2$, and Section \ref{sectd3} does the same for degree $q^3$.
Finally, in Section \ref{sectapp}
we outline some possibilities for future work.

\section{Preliminaries}

Throughout this paper,  $\Fq$ is a finite field with $q$ elements, 
$\Fqn$ is an extension field of degree $n$, 
and $\sigma$ is an automorphism of $\Fqn$ with fixed field $\Fq$; in other words 
$\sigma$ is a generator of the Galois group $\mathrm{Gal}(\Fqn:\Fq)$. 
We write $x^\sigma$ for the image of $x$ under $\sigma$.
Note that $x^\sigma = x^{q^s}$ for some $s\in \{1,\ldots,n\}$ with $\gcd(n,s)=1$.
We use $N$ to denote the norm function from $\Fqn \longrightarrow \Fq$, i.e.,
$N(x)=x^{1+\sigma + \sigma^2 + \cdots + \sigma^{n-1}}$ for $x\in \Fqn$.

\subsection{Linearized Polynomials}

A {\it $\sigma$-linearized polynomial} is a polynomial of the form 
\begin{equation}\label{lin1}
L=a_0x+a_1x^\sigma+a_2x^{\sigma^2}+\cdots+a_d x^{\sigma^d} \in \Fqn[x].
\end{equation}
If $d$ is the largest integer such that $a_d\ne 0$, we call $d$ the {\it $\sigma$-degree} of $L$, and denote it $\deg_\sigma(L)=d$. The $\sigma$-linearized polynomials are precisely those which define an $\Fq$-linear map from $\Fqn$ to itself.
As an $\Fq$-linear map, a $\sigma$-linearized polynomial has a rank and a nullity, 
and as usual $\rank(L) + \nul (L) = n$. Throughout, when referring to the {\it roots} of $L\in \L$, we will mean only elements of $\Fqn$; in other words, the kernel of $L\in \L$ as an $\Fq$-linear map on $\Fqn$. When referring to the nullity of $L\in \L$ we will always mean the nullity of the element of $\End_{\Fq}(\Fqn)$ defined by $L$.

Throughout the paper we will assume that $a_0\not= 0$. Although this is not usually necessary, some of the arguments 
have to be done differently when $a_0=0$, so for ease of presentation we will make this assumption.
When $a_0=0$,  one can usually apply a power of $\sigma$ 
to $L$ to obtain a polynomial that has $a_0\not= 0$.

Let $\L$ denote the set of all $\sigma$-linearized polynomials, and $\L_k$ the set of all $\sigma$-linearized polynomials of $\sigma$-degree at most $k-1$. The set of $\sigma$-linearized polynomials with addition and composition form a ring, which we denote by $(\L,\circ,+)$. If $M=b_0x+b_1x^\sigma+\cdots+b_e x^{\sigma^e}$, then
\[
L\circ M = \sum_{k=0}^{d+e} \left(\sum_i a_i b_{k-i}^{\sigma^i}\right) x^{\sigma^k}.
\]
If $L = L_1\circ L_2$ for $L_1,L_2\in \L$, we say that $L_2$ is a {\it right component} of $L$, and $L_1$ is a {\it left component} of $L$. We say that $L$ is a {\it left composition} of $L_2$ and a {\it right composition} of $L_1$.

We define the {\it greatest common right component} and {\it least common left composition} of $L,M\in \L$ in the natural way, and denote them by $\gcrc(L,M)$ and $\lclc(L,M)$ respectively. This terminology follows \cite{VoGiZi}.

\begin{remark}
The ring $(\L,\circ,+)$ is isomorphic to the {\it skew-polynomial ring} $\Fqn[t;\sigma]$; the non-commutative polynomial ring defined by $t\alpha = \alpha^\sigma t$ for all $\alpha\in \Fqn$. Much of the following can also naturally be expressed in terms of skew-polynomials. Though we will not use this in this paper, we will make use of some basic facts from the theory of skew-polynomials.
\end{remark}

\begin{remark}
In some literature, the symbol $\otimes$ is used instead of $\circ$, and the term {\it symbolic divisor} is used instead of component.
\end{remark}


The following are useful well-known facts. We refer the reader to 
\cite{Giesbrecht}, \cite{Jacobson}, and \cite{WuLiu}. Most are generalisations of results originally proved by Ore \cite{Ore} for the case $q$ prime.

\begin{proposition}
\label{prop:known}
Let $(\L,\circ,+)$ be the $\sigma$-linearized polynomial ring over $\Fqn$ for some $\sigma\in \Aut(\Fqn/\Fq)$.
\begin{enumerate}[label=(\roman*)]
\item
The multiplicative identity of $\L$ is $x$, and the centre of $\L$ is generated (as an $\Fq$-algebra) by 
$x$ and $x^{\sigma^n}$.
\item 
$(\L,\circ,+)$ is a right-Euclidean domain with respect to $\sigma$-degree.
\item
For any $L,M\in \L$, it holds that  $\deg_\sigma(L\circ M) = \deg_\sigma(L)+\deg_\sigma(M)$.
\item
(\cite{Jacobson} Propositions 1.2.11, 1.3.1) For any $A,B\in \L$, it holds that 
\[
\deg_\sigma(L)+\deg_\sigma(M)=\deg_\sigma(\gcrc(L,M))+\deg_\sigma(\lclc(L,M)).
\]
\item
The element $x^{\sigma^n}-x$ generates a maximal two-sided ideal in $\L$, and
\[
\frac{\L}{(x^{\sigma^n}-x)} \simeq M_{n\times n}(\Fq).
\]
\item
The {\it nullity} of $L\in \L_n$ is equal to $\deg_\sigma(\gcrc(L,x^{\sigma^n}-x))$.
\item
The nullity of $L\in \L$ is equal to the nullity of its associated {\it Dickson matrix}
\[
\mathcal{D}_L := \npmatrix{a_0&a_1&\cdots&a_{n-1}\\a_{n-1}^{\sigma}&a_0^{\sigma}&\cdots&a_{n-2}^{\sigma}\\\vdots&\vdots&\ddots&\vdots\\a_1^{\sigma^{n-1}}&a_2^{\sigma^{n-1}}&\cdots&a_0^{\sigma^{n-1}}}.
\]
\end{enumerate}
\end{proposition}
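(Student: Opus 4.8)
The plan is to treat Proposition~\ref{prop:known} as a package of classical facts about the ring $(\L,\circ,+)$, working throughout in the isomorphic skew-polynomial ring $\Fqn[t;\sigma]$ (identifying $x^{\sigma^i}$ with $t^i$ and $\alpha x$ with $\alpha\in\Fqn$) and transporting the statements about roots back through the evaluation homomorphism $\varepsilon\colon\L\to\End_{\Fq}(\Fqn)$. First I would settle (i) and (iii) by direct computation with the composition formula: $x$ is plainly a two-sided identity; the coefficient of $x^{\sigma^{d+e}}$ in $L\circ M$ is $a_d b_e^{\sigma^d}\neq 0$, which yields (iii) and shows $\L$ has no zero divisors; and $\sum_i c_i t^i$ is central iff it commutes with $t$ (forcing each nonzero $c_i\in\Fq$) and with every $\alpha\in\Fqn$ (forcing $n\mid i$ whenever $c_i\neq 0$), so the centre is $\Fq[t^n]$, i.e.\ generated by $x$ and $x^{\sigma^n}$. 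Next, for (ii), I would run the standard right-division algorithm: given $M\neq 0$ of $\sigma$-degree $e$ with leading coefficient $b_e$ and $L$ of $\sigma$-degree $d\geq e$ with leading coefficient $a_d$, subtracting $\bigl(a_d(b_e^{\sigma^{d-e}})^{-1}\bigr)x^{\sigma^{d-e}}\circ M$ strictly lowers the $\sigma$-degree, and iterating gives $L=Q\circ M+R$ with $\deg_\sigma R<e$. This uses that $\Fqn$ is a field and $\sigma$ an automorphism. Consequently $(\L,\circ,+)$ is a right-Euclidean domain, every left ideal $\L K$ is principal with $\dim_{\Fqn}(\L/\L K)=\deg_\sigma K$, and in particular $\L=\L_n+\L(x^{\sigma^n}-x)$.

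For (iv) I would observe that $\gcrc(L,M)$ generates $\L L+\L M$ and $\lclc(L,M)$ generates $\L L\cap\L M$, and then pass the short exact sequence of left $\L$-modules
\[
0\longrightarrow\L/(\L L\cap\L M)\longrightarrow\L/\L L\ \oplus\ \L/\L M\longrightarrow\L/(\L L+\L M)\longrightarrow 0
\]
through $\dim_{\Fqn}$ to obtain $\deg_\sigma\lclc(L,M)+\deg_\sigma\gcrc(L,M)=\deg_\sigma L+\deg_\sigma M$; this is precisely \cite[Prop.~1.2.11, 1.3.1]{Jacobson}. For (v), I would note that $\varepsilon$ is an $\Fq$-algebra homomorphism with $x^{\sigma^n}-x\in\ker\varepsilon$, and then argue that $\varepsilon$ is injective on $\L_n$: a nonzero element of $\L_n$, reduced modulo $x^{\sigma^n}-x$, stays nonzero because $\gcd(s,n)=1$ prevents collisions among its exponents, and it then has ordinary degree $<q^n$, so it cannot vanish on all of $\Fqn$. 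Since $\dim_{\Fq}\L_n=n^2=\dim_{\Fq}M_{n\times n}(\Fq)$, the map $\varepsilon|_{\L_n}$ is an $\Fq$-linear isomorphism; combining this with $\L=\L_n+\L(x^{\sigma^n}-x)$ and $\L(x^{\sigma^n}-x)\subseteq\ker\varepsilon$ gives $\ker\varepsilon=\L(x^{\sigma^n}-x)$, a two-sided ideal because its generator is central by (i), so $\varepsilon$ descends to a ring isomorphism $\L/(x^{\sigma^n}-x)\cong M_{n\times n}(\Fq)$. Maximality of the ideal is then the simplicity of the matrix ring.

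Item (vi) would follow from (v) by a dimension count. Writing $\bar L\in M_{n\times n}(\Fq)$ for the image of $L$ and using $\L(x^{\sigma^n}-x)=\ker\varepsilon$ together with the surjectivity of $\varepsilon$, one has $\L\,\gcrc(L,x^{\sigma^n}-x)=\L L+\L(x^{\sigma^n}-x)=\varepsilon^{-1}\bigl(M_{n\times n}(\Fq)\,\bar L\bigr)$, hence
\[
n\cdot\deg_\sigma\gcrc(L,x^{\sigma^n}-x)=\dim_{\Fq}\bigl(M_{n\times n}(\Fq)/M_{n\times n}(\Fq)\,\bar L\bigr)=n\cdot\nul(\bar L)=n\cdot\nul(L),
\]
since the left ideal $M_{n\times n}(\Fq)\,\bar L$ consists of the matrices whose rows lie in the row space of $\bar L$ and so has $\Fq$-dimension $n\cdot\rank(\bar L)$; dividing by $n$ proves (vi). Finally, (vii) is the classical identity $\nul(L)=n-\rank\mathcal{D}_L$ of \cite{LN}; alternatively I would recover it from (vi) by checking that $\mathcal{D}_L$ represents the $\Fqn$-linear map obtained from $L$ under the $\Fq$-embedding $v\mapsto(v,v^\sigma,\dots,v^{\sigma^{n-1}})^\top$, whose rank equals $\rank_{\Fq}(L)$ because a Moore matrix built on a normal $\Fq$-basis of $\Fqn$ is invertible.

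I expect the main obstacle to be (v): one has to keep rigorously separate the two roles of a $\sigma$-linearized polynomial — as a formal element of $\L$ and as the $\Fq$-linear map it induces on $\Fqn$ — and in particular control ordinary polynomial degrees after reduction modulo $x^{\sigma^n}-x$ when $\sigma$ is not the $q$-Frobenius, and then invoke the simplicity of $M_{n\times n}(\Fq)$ correctly. Once (v) is available, (vi) and (vii) are short, and (i)--(iv) are routine skew-polynomial algebra that one could just as well quote wholesale from \cite{Ore,Jacobson,Giesbrecht,WuLiu}.
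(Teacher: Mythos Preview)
Your proposal is correct and far more detailed than the paper: the paper does not prove Proposition~\ref{prop:known} at all, but simply introduces it as a list of ``useful well-known facts'' with pointers to \cite{Giesbrecht}, \cite{Jacobson}, \cite{WuLiu}, and \cite{Ore}. Your outline therefore supplies what the paper deliberately omits, and the arguments you sketch (direct computation for (i) and (iii); right division for (ii); the short exact sequence of cyclic left modules for (iv); the evaluation map $\varepsilon$ and a dimension count for (v)--(vi); the Moore/Dickson identification for (vii)) are precisely the classical ones contained in those references, so there is no genuine discrepancy in approach to compare.

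One small wording issue in your sketch of (v): when you write ``reduced modulo $x^{\sigma^n}-x$ \ldots\ has ordinary degree $<q^n$'', note that as an element of $\Fqn[x]$ one has $x^{\sigma^n}-x=x^{q^{sn}}-x$, and reducing modulo that does not by itself bring the degree below $q^n$. What you want is reduction modulo $x^{q^n}-x$ in the ordinary polynomial ring (using $\alpha^{q^n}=\alpha$ on $\Fqn$), under which $x^{q^{si}}\mapsto x^{q^{si\bmod n}}$; then $\gcd(s,n)=1$ guarantees that the exponents $si\bmod n$ for $i=0,\dots,n-1$ are distinct, so the reduced polynomial is nonzero of degree at most $q^{n-1}<q^n$. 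Alternatively, and more cleanly, invoke Artin's linear independence of the characters $\id,\sigma,\dots,\sigma^{n-1}$ to conclude directly that $\varepsilon|_{\L_n}$ is injective.
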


By $M \modr L$ we mean the unique $R\in \L$ of $\sigma$-degree less than $\deg_\sigma(L)$ such that there exists $Q\in \L$ satisfying 
\[
M = Q\circ L+R.
\]

\begin{remark}\label{gcdrem}
If $x^\sigma = x^q$, then it is known that $M \modr L$ is equal to the usual polynomial remainder $M\mod L$, although the quotients $Q,Q'$ are not equal; indeed if $M = Q'L+R$, $Q'$ is not in $\L$. Furthermore, the greatest common right component of $L,M\in \L$ coincides with the greatest common divisor (in the usual sense) of $L$ and $M$. 

Suppose now $x^\sigma=x^{q^s}$. A $\sigma$-linearized polynomial with coefficients in $\Fqn$ can also be viewed as a $\sigma$-linearized polynomial with coefficients in $\FF_{q^{ns}}$. Suppose $L,M$ are two $\sigma$-linearized polynomials with coefficients in $\Fqn$. 

As both the usual Euclidean algorithm with multiplication and the right-Euclidean algorithm with composition do not require any calculation in an extension field, the $\gcd$ (resp. $\gcrc$) of $L$ and $M$ over $\Fqn$ coincides with the $\gcd$ (resp. $\gcrc$) of $L$ and $M$ over $\FF_{q^{ns}}$. These coincide with each other by the previous statement, and so for any $\sigma$-linearized polynomials, their $\gcd$ and $\gcrc$ over $\Fqn$ coincide.
\end{remark}

\begin{example}
Let $n=5$, and let $x^\sigma=x^q$, $x^\tau=x^{q^2}$, and let $\L,\L'$ be the rings of $\sigma$- and $\tau$-linearized polynomials over $\FF_{q^5}$ respectively. Then the polynomial $L= x^{q^2}-x$ has $\sigma$-degree two, and $\tau$-degree one. Clearly $L$ has nullity one, as there are $q$ roots of $L$ in $\FF_{q^5}$. 

Then $\gcrc(x^{q^5}-x,x^{q^2}-x)= \gcd(x^{q^5}-x,x^{q^2}-x)=x^q-x$, which has $\sigma$-degree one. Now $x^{\tau^n}-x=x^{q^{10}}-x$, and $\gcrc(x^{q^{10}}-x,x^{q^2}-x)= \gcd(x^{q^{10}}-x,x^{q^2}-x)=x^{q^2}-x$, which has $\tau$-degree one also.
This illustrates Proposition \ref{prop:known} part (vi).
\end{example}

\subsection{Projective Polynomials}

Given $L\in \L$ of the form \eqref{lin1}, we define its associated {\it projective polynomial}
to be 
\[
P_L(y) = a_0+a_1y +a_2y^{[2]}+\cdots+a_dy^{[d]},
\]
 where
\[
y^{[i]} := y^{\frac{\sigma^i-1}{\sigma-1}}.
\] 
Note that $L(x) = xP_L(x^{\sigma-1})$, and so we say that $P_L$ is {\it associated} to $L$. We also say that $P_L$ has $\sigma$-degree $d$.

Projective polynomials were introduced by Abhyankar \cite{Abhyankar}, where the coefficient field was the transcendental extension $\Fq(t)$ of $\Fq$. The original motivation was to find polynomials with a given Galois group. However they have also been studied over $\Fqn$, for example by Bluher \cite{Bluher}, and in \cite{VoGiZi}, due to their interesting possible number of roots, and their connection to calculating composition collisions. Projective polynomials with maximum number of roots have been used in \cite{crypto} for attacking the 
discrete logarithm problem in cryptography; see Section \ref{sectapp} for further details.

The following is the most general known result for the number of roots of a projective polynomial.

\begin{theorem}[\cite{VoGiZi}]
The number of roots of a projective polynomial of $\sigma$-degree $d$ over $\Fqn$ is of the form
\[
\sum_i \frac{q^{n_i}-1}{q-1}
\] 
for some nonnegative integers $n_i$ with $\sum_i n_i = d$.
\end{theorem}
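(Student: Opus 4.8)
The plan is to transfer the counting problem from the projective polynomial $P_L$ to the linearized polynomial $L$ via the identity $L(x) = x\,P_L(x^{\sigma-1})$, and then to exploit the structure of the kernel of $L$ as an $\Fq$-subspace of $\Fqn$ together with the group action of $\Fq^\times$. Concretely, write $V = \ker L \subseteq \Fqn$, an $\Fq$-vector space of some dimension $k\le d$. First I would observe that the roots of $P_L$ in $\Fqn$ are governed by the set $S = \{x^{\sigma-1} : x\in V\setminus\{0\}\}$: indeed $P_L(z)=0$ with $z\ne 0$ forces, after picking any $x$ with $x^{\sigma-1}=z$, that $L(x)=xP_L(z)=0$, so $x\in V\setminus\{0\}$; conversely every nonzero $x\in V$ gives a root $z=x^{\sigma-1}$ of $P_L$ (the value $z=0$, i.e. the case $x\in\Fq^\times$, must be tracked separately, corresponding to whether $a_0 = P_L(0)=0$, but under the running assumption $a_0\neq 0$ this does not occur, so $\Fq^\times\cap V=\{0\}$ and we may ignore $z=0$). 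Hence counting roots of $P_L$ amounts to counting the image of the map $\varphi\colon V\setminus\{0\}\to\Fqn$, $x\mapsto x^{\sigma-1}$.

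The key step is to analyze the fibres of $\varphi$. Since $\varphi(x)=\varphi(x')$ iff $x'/x\in\Fq^\times$, the fibres are exactly the nonzero $\Fq$-lines through the origin that lie inside $V$ — except that a line $\Fq^\times x$ lies entirely in $V$ iff $x\in V$ (as $V$ is $\Fq$-linear), so in fact \emph{every} nonzero $x\in V$ has its whole $\Fq$-line in $V$. Therefore $\varphi$ descends to an injection from the projective space $\mathrm{PG}(V) = (V\setminus\{0\})/\Fq^\times$ into $\Fqn$, and the number of roots of $P_L$ in $\Fqn$ equals $|\mathrm{PG}(V)| = \frac{q^{k}-1}{q-1}$ where $k=\dim_{\Fq}\ker L$. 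This already gives the theorem in the special shape with a single index $i$ and $n_1 = k \le d$; to obtain the stated general form $\sum_i \frac{q^{n_i}-1}{q-1}$ with $\sum_i n_i = d$ one should \emph{not} assume $a_0\neq 0$ but allow the fully general $P_L$ of $\sigma$-degree $d$. In that case I would factor out the highest power of $y$ dividing $P_L$, or equivalently use Proposition~\ref{prop:known}(vi): write $L = L_1 \circ (x^{\sigma^m}-x) \circ \cdots$ decomposing along $\gcrc(L, x^{\sigma^n}-x)$, and more generally use that $\ker L$, via the semilinear twist $x\mapsto x^{\sigma-1}$, decomposes the root set of $P_L$ into pieces each of which is the image of a $\sigma$-stable flag. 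The cleanest route is: let $m$ be the largest integer with $y^{[m-?]}\mid P_L$ accounting for a root of high multiplicity "at infinity"/zero, peel it off, and induct on $d$; each peeled piece contributes one term $\frac{q^{n_i}-1}{q-1}$, and $\sum n_i = d$ follows from additivity of $\sigma$-degree under composition, Proposition~\ref{prop:known}(iii).

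The main obstacle I anticipate is the bookkeeping around the value $z = 0$ and roots of $P_L$ "at the boundary": the correspondence $x\leftrightarrow x^{\sigma-1}$ is a $q$-to-$1$ (really $(q-1)$-to-$1$ on nonzero classes) map on $V\setminus\{0\}$ but behaves differently at $x\in\Fq^\times$ and, when $a_d$ or $a_0$ vanish, at the top and bottom of the polynomial — this is exactly why the single clean term $\frac{q^k-1}{q-1}$ must sometimes split into several terms. Making precise which $\sigma$-stable subspaces of $\ker L$ give rise to genuinely distinct roots, and verifying that the resulting exponents $n_i$ sum to $d$ rather than merely to $\dim\ker L$, is the delicate point; I would handle it by passing to the skew-polynomial ring $\Fqn[t;\sigma]$, using that $P_L$ corresponds to an element whose factorization type over $\Fq$ (via reduction modulo $t^n - 1$, Proposition~\ref{prop:known}(v)) controls both $\dim\ker L$ and the multiplicities, and then reading off the root-count of $P_L$ from the elementary-divisor structure of the associated $n\times n$ matrix over $\Fq$. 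Everything else — the injectivity of $\varphi$ on $\mathrm{PG}(\ker L)$ and the formula $|\mathrm{PG}(W)| = \frac{q^{\dim W}-1}{q-1}$ — is routine.
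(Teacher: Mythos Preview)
There is a genuine gap. Your key step --- ``after picking any $x$ with $x^{\sigma-1}=z$'' --- silently assumes that every root $z$ of $P_L$ lies in the image of the map $x\mapsto x^{\sigma-1}$ on $\Fqn^\times$. But that image is exactly the norm-one subgroup of $\Fqn^\times$, so your argument only counts roots $z$ with $N(z)=1$. Concretely, take $d=1$ and $L(x)=x^\sigma-cx$ with $c\in\Fqn^\times$ and $N(c)\ne 1$: then $P_L(y)=y-c$ has one root (namely $y=c$), yet $\ker L=\{0\}$, so your formula $\tfrac{q^{\dim\ker L}-1}{q-1}$ gives $0$. The discrepancy has nothing to do with whether $a_0=0$; peeling off powers of $y$ or factoring through $x^{\sigma^m}-x$ will never recover roots whose norm is not $1$, so your proposed inductive fix is aimed at the wrong obstruction.

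The paper's route (Proposition~\ref{prop:PLroots1}) repairs exactly this point: stratify the roots of $P_L$ by their norm $\lambda\in\Fq^\times$, and for each $\lambda$ fix $\alpha\in\Fqn$ with $N(\alpha)=\lambda$. A root $t$ with $N(t)=\lambda$ can then be written $t=\alpha z^{\sigma-1}$, whence $0=P_L(t)=P_{L_\alpha}(z^{\sigma-1})=L_\alpha(z)/z$; thus the roots of norm $\lambda$ are in bijection with $\PG(\ker L_\alpha)$, not $\PG(\ker L)$. This yields $\sum_{\lambda\in\Fq^\times}\tfrac{q^{d_\lambda}-1}{q-1}$ with $d_\lambda=\deg_\sigma\gcrc(L,x^{\sigma^n}-\lambda x)$. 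Since the polynomials $x^{\sigma^n}-\lambda x$ are pairwise right-coprime, the $\lclc$ of the corresponding $\gcrc$'s has $\sigma$-degree $\sum_\lambda d_\lambda$ and is a right component of $L$, giving $\sum_\lambda d_\lambda\le d$; padding with zeros produces the stated form with $\sum_i n_i=d$. Your argument correctly handled the $\lambda=1$ term; what was missing is the twist $L\mapsto L_\alpha$ needed to access the remaining terms.
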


However this result does not give a way of determining the number of roots of a particular projective polynomial from its coefficients. In \cite{HellKhol}, Helleseth-Kholosha gave criteria for the case $d=2$, $q$ even. In this paper we will extend this to $q$ odd and general $d$, and we give a way
of determining the number of roots from the coefficients.

\subsection{Companion Matrix}\label{comp}

Let $L=a_0x+a_1x^\sigma+\cdots+a_d x^{\sigma^d}$ 
be a $\sigma$-linearized polynomial in $\Fqn[x]$ with $\sigma$-degree equal to $d$.
Define the {\it companion matrix} of $L$ (and of $P_L$) as the $d\times d$ matrix
\[
C_L = \npmatrix{0&0&\cdots&0&-a_0/a_d\\1&0&\cdots&0&-a_1/a_d\\\vdots&\vdots&\ddots&\vdots&\vdots\\0&0&\cdots&1&-a_{d-1}/a_d}.
\]
We define further the matrix
\begin{equation}
\label{eqn:AL}
A_L = C_L C_L^{\sigma}\cdots C_L^{\sigma^{n-1}},
\end{equation}
and we denote the characteristic polynomial of $A_L$ by $\chi_L$. 
In this paper we consider $A_L$  for the following reason.
\begin{proposition}
\label{prop:AL}
With the above notation, the map
\begin{align*}
\psi : & \ \L_d \longrightarrow \L_d \\
 & M  \mapsto  x^\sigma \circ M \modr L
\end{align*}
defines an $\Fqn$-semilinear map on $\L_d$ with associated matrix $C_L$ and automorphism $\sigma$.

The map
\begin{align*}
\phi : & \ \L_d \longrightarrow \L_d \\
 & M  \mapsto  x^{\sigma^n} \circ M \modr L
\end{align*}
defines an $\Fqn$-linear map on $\L_d$ with associated matrix $A_L$.
\end{proposition}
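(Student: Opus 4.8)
The plan is to work directly with the $\Fqn$-basis $\{x, x^\sigma, \ldots, x^{\sigma^{d-1}}\}$ of $\L_d$ and compute the action of $\psi$ on each basis vector. For $i < d-1$ one has $x^\sigma \circ x^{\sigma^i} = x^{\sigma^{i+1}}$, which is already of $\sigma$-degree $i+1 \le d-1$, so no reduction is needed and $\psi(x^{\sigma^i}) = x^{\sigma^{i+1}}$. The only interesting case is $i = d-1$: here $x^\sigma \circ x^{\sigma^{d-1}} = x^{\sigma^d}$, and one must reduce $x^{\sigma^d} \modr L$. Since $L = a_0 x + \cdots + a_d x^{\sigma^d}$ with $a_d \ne 0$, we have $x^{\sigma^d} = a_d^{-1}(L - a_0 x - \cdots - a_{d-1}x^{\sigma^{d-1}})$, hence $x^{\sigma^d} \modr L = -\sum_{j=0}^{d-1} (a_j/a_d)\, x^{\sigma^j}$. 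Reading off the coordinate vectors, the matrix of $\psi$ in this basis is exactly $C_L$ as displayed. The one subtlety to flag explicitly is semilinearity: for $\alpha \in \Fqn$ we have $\psi(\alpha M) = x^\sigma \circ (\alpha M) \modr L$, and since $x^\sigma \circ (\alpha M) = \alpha^\sigma (x^\sigma \circ M)$ in the skew ring (using $t\alpha = \alpha^\sigma t$), and reduction mod$_r L$ is $\Fqn$-linear, we get $\psi(\alpha M) = \alpha^\sigma \psi(M)$. So $\psi$ is $\sigma$-semilinear with matrix $C_L$, meaning that if $M$ has coordinate vector $v$ then $\psi(M)$ has coordinate vector $C_L v^\sigma$ (entrywise $\sigma$).

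For the second statement, the key observation is that $\phi = \psi^{\circ n}$: indeed $x^{\sigma^n} \circ M = x^\sigma \circ x^\sigma \circ \cdots \circ x^\sigma \circ M$ ($n$ factors), and since $\modr L$ is a ring-homomorphism onto $\L/(L)$ — more precisely, reduction is compatible with left composition in the sense that $(N_1 \circ N_2)\modr L = (N_1 \circ (N_2 \modr L))\modr L$ — we can reduce mod$_r L$ after each application of $x^\sigma\circ(-)$, which is precisely one application of $\psi$. Iterating a $\sigma$-semilinear map with matrix $C_L$ exactly $n$ times gives, by the standard composition rule for semilinear maps, a map with matrix $C_L C_L^\sigma C_L^{\sigma^2}\cdots C_L^{\sigma^{n-1}} = A_L$ and automorphism $\sigma^n$. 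Since $\sigma$ has order $n$ (its fixed field is $\Fq$), $\sigma^n = \id$, so $\phi$ is honestly $\Fqn$-linear with matrix $A_L$.

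The main thing requiring care is not any single computation but the bookkeeping around $\modr$: one must justify that reducing mod$_r L$ step-by-step during the $n$-fold composition yields the same result as composing first and reducing once, i.e. that the quotient $\L/(L)$ (as a left $\Fqn$-module) carries a well-defined action of left-multiplication by $x^\sigma$, and that $\L_d$ is a set of coset representatives on which this action is computed by the formula above. This follows from the right-Euclidean property of $(\L,\circ,+)$ with respect to $\sigma$-degree (Proposition \ref{prop:known}(ii)): every coset of $(L)$ has a unique representative of $\sigma$-degree $<d$, so $\L_d \cong \L/(L)$ as $\Fqn$-modules, and left composition by any fixed element descends to this quotient. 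Once that identification is in place, the semilinear-matrix computation above is entirely routine, and the equality of matrices $\phi \leftrightarrow A_L$ is immediate from \eqref{eqn:AL}.
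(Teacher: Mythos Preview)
Your proof is correct and follows essentially the same route as the paper's: compute $\psi$ on the canonical basis $\{x,x^\sigma,\ldots,x^{\sigma^{d-1}}\}$ to read off $C_L$, verify $\sigma$-semilinearity via $x^\sigma\circ(\alpha M)=\alpha^\sigma(x^\sigma\circ M)$, and then obtain $\phi=\psi^n$ with matrix $A_L=C_LC_L^\sigma\cdots C_L^{\sigma^{n-1}}$ and automorphism $\sigma^n=\id$. Your extra paragraph justifying that stepwise reduction $\modr L$ agrees with a single reduction (i.e.\ that left composition descends to $\L/\L\!\circ\! L\cong\L_d$) makes explicit a point the paper leaves implicit; the one small slip is calling reduction a ``ring-homomorphism'' (the left ideal $\L\circ L$ need not be two-sided), but you immediately restate the correct property $(N_1\circ N_2)\modr L=(N_1\circ(N_2\modr L))\modr L$, which is all that is needed.
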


\begin{proof} 
Each map is well-defined, as $\L$ is a right-Euclidean domain. Furthermore both are additive, as $\L$ is a ring. 
Thus, to show semilinearity/linearity 
it suffices to show that $\psi(aM) = a^\sigma \psi(M)$ and $\phi(aM) = a\phi(M)$ for all $M\in \L$, $a\in \Fqn$. This follows from the fact that $x^\sigma \circ ax = a^\sigma x^\sigma$ for all $a\in \Fqn$, and so $x^{\sigma}\circ (aM) = a^\sigma x^{\sigma}\circ M$. Thus $x^{\sigma}\circ (aM)\modr L = a^\sigma (x^{\sigma}\circ M\modr L)$, as claimed. 

We choose the canonical basis $\{x,x^\sigma,\ldots,x^{\sigma^{d-1}}\}$ for $\L_d$, and represent $M\in \L_d$ as a column vector consisting of its coefficients. 
Then $x^\sigma\circ x^{\sigma^i} = x^{\sigma^{i+1}}$ for all $i$. 
Thus $\psi$ maps the $i$-th basis vector to the $(i+1)$-st basis vector for $i<d-1$. Furthermore 
\begin{align*}
x^\sigma \circ x^{\sigma^{d-1}} &= x^{\sigma^d}\\
& \equiv \frac{1}{a_d}(a_d x^{\sigma^d}-L) \modr L\\
&= -\frac{1}{a_d}\left(a_0x+a_1x^{\sigma}+\cdots+a_{d-1}x^{\sigma^{d-1}}\right),
\end{align*}
showing that $\psi$ has matrix $C_L$ with respect to the canonical basis.

Finally, as $\phi = \psi^n$, it follows that $\phi$ is a linear transformation of $\L_d$, with associated matrix $A_L$ with respect to the canonical basis of $\L_d$.
\end{proof}

\begin{remark}
In \cite{LeBorgne} the characteristic polynomial of the matrix $A_L$ is referred to as the {\it semi-characteristic polynomial} of the semilinear transformation $\psi$.
In \cite{S2} a similar matrix was used to construct new semifields and MRD codes.
\end{remark}

\begin{remark}\label{detnorm}
Note that $\det(A_L) = N(\det(C_L)) =  N((-1)^d a_0/a_d)$.
In particular, this implies that $\det(A_L)$ lies in $\Fq$.
In fact, all the coefficients of $\chi_L$ (the characteristic polynomial of $A_L$) are in $\Fq$,
as we now prove.
\end{remark}

\begin{theorem}\label{lem:charq}
The coefficients of the characteristic polynomial of $A_L$ all lie in $\Fq$.
\end{theorem}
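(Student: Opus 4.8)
The plan is to show that applying $\sigma$ to the coefficients of $\chi_L$ leaves $\chi_L$ unchanged; since the fixed field of $\sigma$ is $\Fq$, this forces every coefficient into $\Fq$.

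First I would record the effect of $\sigma$ on $A_L$. Applying $\sigma$ entrywise to $A_L = C_L C_L^\sigma\cdots C_L^{\sigma^{n-1}}$ gives
\[
A_L^\sigma = C_L^\sigma C_L^{\sigma^2}\cdots C_L^{\sigma^n}.
\]
Since $\sigma^n = \id$ on $\Fqn$, we have $C_L^{\sigma^n} = C_L$, so the product telescopes to
\[
A_L^\sigma = C_L^\sigma C_L^{\sigma^2}\cdots C_L^{\sigma^{n-1}} C_L = C_L^{-1}\bigl(C_L C_L^\sigma\cdots C_L^{\sigma^{n-1}}\bigr)C_L = C_L^{-1}A_L C_L,
\]
where $C_L$ is invertible because $\det(C_L) = (-1)^d a_0/a_d \neq 0$ (recall $a_0\neq 0$). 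Hence $A_L^\sigma$ is conjugate to $A_L$ over $\Fqn$.

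Next, since the characteristic polynomial is a polynomial expression in the matrix entries, applying $\sigma$ to the coefficients of $\chi_L = \det(xI - A_L)$ yields $\det(xI - A_L^\sigma)$, the characteristic polynomial of $A_L^\sigma$. But conjugate matrices have the same characteristic polynomial, so
\[
\chi_L^\sigma = \det(xI - A_L^\sigma) = \det(xI - A_L) = \chi_L.
\]
Thus every coefficient of $\chi_L$ is fixed by $\sigma$, and therefore lies in the fixed field $\Fq$, as claimed.

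There is essentially no serious obstacle here; the one point requiring care is the telescoping identity $A_L^\sigma = C_L^{-1}A_L C_L$, which rests on the cyclic structure of the product in \eqref{eqn:AL} together with $\sigma^n = \id$ on $\Fqn$, and on noting that $C_L$ is invertible under the standing assumption $a_0\neq 0$.
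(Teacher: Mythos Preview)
Your proof is correct and takes a genuinely different route from the paper's. The paper reduces the claim, via Newton's identities, to showing that $\Tr(A_L^i)\in\Fq$ for every $i\ge 0$, and then checks $\Tr(A_L^i)^\sigma=\Tr(A_L^i)$ using the cyclic invariance of trace, $\Tr(XY)=\Tr(YX)$, applied to the shifted product $(A_L^i)^\sigma$. Your argument is more direct: you observe the conjugacy $A_L^\sigma = C_L^{-1}A_L C_L$ and conclude immediately that the entire characteristic polynomial is $\sigma$-invariant. This bypasses Newton's identities altogether and is cleaner. The one small trade-off is that your conjugacy step uses the invertibility of $C_L$, i.e.\ the standing assumption $a_0\ne 0$; the paper's trace-cyclicity argument goes through without that hypothesis, since $\Tr((XY)^i)=\Tr((YX)^i)$ holds for arbitrary $X,Y$. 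Under the paper's conventions this distinction is immaterial, and your proof is a perfectly good (arguably preferable) alternative.
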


\begin{proof}
The coefficients of the characteristic polynomial of $A_L$ are 
the elementary symmetric polynomials in the eigenvalues of $A_L$. Since the trace of $A_L^i$ is equal to the sum of the $i$-th powers of the eigenvalues of $A_L$, by Newton's identities it is sufficient to show that the trace of $A_L^i$ lies in $\Fq$ for each $i\geq 0$.

We have that $\Tr(A_L^i) = \Tr((C_LC_L^\sigma\cdots C_L^{\sigma^{n-1}})^i)$. 
By the well-known commutativity property of the trace function and by applying the automorphism $\sigma$
it is clear that $\Tr(A_L^i)^\sigma = \Tr((A_L^i)^\sigma) = \Tr(A_L^i)$, as claimed.
\end{proof}

\begin{remark}
We note that the matrix $A_L^i$ would be the matrix to use in place of $A_L$
if we considered $L\in \L$ on $\mathbb{F}_{q^{in}}$.
This is because 
$C_L$ has entries in $\Fqn$ and so
$$C_LC_L^\sigma\cdots  C_L^{\sigma^{in-1}}=
(C_LC_L^\sigma\cdots C_L^{\sigma^{n-1}})^i.$$
\end{remark}

\subsection{A useful homomorphism}

Given $L = \sum a_i x^{\sigma^i}\in \L$ and
$\alpha\in \Fqn$, define $L_\alpha = \sum a_i \alpha^{[i]}x^{\sigma^i}$. It is clear that $P_{L_\alpha} (y) = P_L(\alpha y)$. Furthermore it holds that
\[
L_\alpha \circ M_\alpha = (L\circ M)_\alpha,
\]
i.e. $L\mapsto L_\alpha$ is a homomorphism on the ring of linearized polynomials with composition. 
Consequently, we have that
\[
\gcd(L,M)_\alpha = \gcd(L_\alpha,M_\alpha)
\]
(recall that $\gcd = \gcrc$ by Remark \ref{gcdrem}).
\begin{remark}
This arises naturally in the skew-polynomial ring, where the map $\ell(t)\mapsto \ell(\alpha t)$ is a ring-isomorphism.
\end{remark}

\section{Main Results}\label{sectmain}

We present one Proposition before our main theorems.

\subsection{Roots of $L$ and $P_L$}

The following relates the roots of $P_L(y)$ to the roots of the $L_\alpha$'s. 
\begin{proposition}
\label{prop:PLroots1}
The number of roots of $P_L(y)$ in $\Fqn$ is equal to 
\[
\sum_{\lambda\in \Fq^*} \frac{q^{d_\lambda}-1}{q-1}
\quad where \quad d_\lambda=\deg_\sigma(\gcrc(L(x),x^{\sigma^n}-\lambda x)).
\]
\end{proposition}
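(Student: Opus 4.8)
The plan is to establish a bijection (or at least a counting correspondence) between roots of $P_L(y)$ in $\Fqn$ and $\sigma$-linear structure parametrized by the elements $\lambda\in\Fq^*$, using the factorization $L(x)=xP_L(x^{\sigma-1})$. First I would observe that if $\beta\in\Fqn^*$ is a root of $L$, then $y=\beta^{\sigma-1}$ is a root of $P_L$; conversely, if $y_0\in\Fqn$ is a nonzero root of $P_L$, then the set of $\beta$ with $\beta^{\sigma-1}=y_0$ is either empty or a coset of $\ker(x\mapsto x^{\sigma-1})=\Fq^*$, hence has size exactly $q$ when nonempty (and the question of whether it is nonempty is governed by whether $N(y_0)=1$, by Hilbert 90, since $N(\beta^{\sigma-1})=1$ always). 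So the roots of $P_L$ in $\Fqn$ split according to the value $N(y_0)\in\Fq^*$: those with $N(y_0)=1$ lift to roots of $L$, the others do not, and I must account for both types uniformly.

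The uniform device is the twist $L_\alpha$ introduced in the excerpt: $P_{L_\alpha}(y)=P_L(\alpha y)$, so $y_0$ is a root of $P_L$ iff $y_0/\alpha$ is a root of $P_{L_\alpha}$. Next I would relate roots of a linearized polynomial $M(x)$ with a \emph{fixed} norm value to a $\gcrc$. Specifically, for $\lambda\in\Fq^*$, the roots $\beta\in\Fqn$ of $L$ with the additional property that "$\beta^{\sigma^n-1}$" behaves like $\lambda$ — more precisely, the common roots of $L(x)$ and $x^{\sigma^n}-\lambda x$ — form an $\Fq$-subspace whose dimension is $d_\lambda:=\deg_\sigma(\gcrc(L,x^{\sigma^n}-\lambda x))$; this uses part (vi) of Proposition~\ref{prop:known} applied over a suitable extension, together with the fact that $x^{\sigma^n}-\lambda x$ acts on $\Fqn$ as $x\mapsto x^{q^{n}}-\lambda x=x-\lambda x=(1-\lambda)x$... — here I need care, since $x^{\sigma^n}=x$ on $\Fqn$, so I should instead pass to $\Fqn$-coefficients and interpret $x^{\sigma^n}-\lambda x$ as an element of the skew-polynomial ring whose root space over a splitting field is what matters. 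The cleaner route: the roots of $P_L$ with $N(y_0)=\lambda^{-1}$ correspond, via $y_0=\alpha^{\sigma-1}\lambda'$ for appropriate $\alpha$, to roots of some twisted $L_\alpha$ lying in the "$\lambda$-eigenspace" of the $\sigma^n$-Frobenius action on $\L_d$, which by Proposition~\ref{prop:AL} is the $\lambda$-eigenspace of $A_{L}$ — and its dimension is exactly $d_\lambda$ by part (vi) of Proposition~\ref{prop:known}.

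Then I would count: for each $\lambda\in\Fq^*$, the roots of $P_L$ in $\Fqn$ contributing norm value $\lambda^{-1}$ (equivalently, associated to $\gcrc(L,x^{\sigma^n}-\lambda x)$) number $\frac{q^{d_\lambda}-1}{q-1}$, because the corresponding $\Fq$-subspace of "directions" has dimension $d_\lambda$ and each projective point $y_0$ is hit by exactly $q-1$ scalar multiples within that subspace, minus the zero vector — i.e., a $d_\lambda$-dimensional $\Fq$-space contributes $(q^{d_\lambda}-1)/(q-1)$ values of $y_0=\beta^{\sigma-1}$ after quotienting by $\Fq^*$. Summing over $\lambda\in\Fq^*$ and checking that these classes are disjoint and exhaust all roots of $P_L$ (disjointness because distinct $\lambda$ give distinct norm values; exhaustion because every root $y_0\neq 0$ of $P_L$ has some norm value $N(y_0)\in\Fq^*$) gives the formula. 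The main obstacle will be the bookkeeping in the previous paragraph: correctly identifying which twist $L_\alpha$ and which $\lambda$-eigenspace a given root of $P_L$ belongs to, and verifying that the dimension of that eigenspace equals $\deg_\sigma\gcrc(L,x^{\sigma^n}-\lambda x)$ rather than the $\gcrc$ of some twist — this requires checking that the $\gcrc$ is insensitive to the twist in the relevant way, or else absorbing the twist into a reparametrization of $\lambda$. I would resolve this by working entirely inside $\L_d$ with the $\Fqn$-linear map $\phi$ of Proposition~\ref{prop:AL}, whose eigenvalues lie in $\Fq$ (Remark~\ref{detnorm}/Theorem~\ref{lem:charq}), so that the eigenspace decomposition of $A_L$ directly yields the decomposition $\sum_\lambda (q^{d_\lambda}-1)/(q-1)$.
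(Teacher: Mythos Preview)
Your overall strategy matches the paper's: partition the roots of $P_L$ by their norm $\lambda\in\Fq^*$, write each such root as $t=\alpha z^{\sigma-1}$ with $N(\alpha)=\lambda$ (Hilbert 90), observe that $z$ is then a nonzero root of $L_\alpha$, and count the resulting $\Fq$-projective points of $\ker(L_\alpha)$ as $(q^{\nul(L_\alpha)}-1)/(q-1)$. You also correctly isolate the one nontrivial step: showing that $\nul(L_\alpha)=d_\lambda=\deg_\sigma\gcrc(L,x^{\sigma^n}-\lambda x)$, i.e.\ that the twist can be absorbed into the parameter $\lambda$.

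The gap is in how you propose to close that step. You suggest passing to the $\lambda$-eigenspace of $A_L$ via Proposition~\ref{prop:AL} and then invoking Proposition~\ref{prop:known}(vi). But Proposition~\ref{prop:known}(vi) only gives $\nul(L)=\deg_\sigma\gcrc(L,x^{\sigma^n}-x)$; the statement that the $\lambda$-eigenspace of $A_L$ has dimension $d_\lambda$ is exactly the content of Theorem~\ref{thm:nullAL}, which is proved \emph{after} this proposition. Likewise, the link from roots of $P_L$ to eigenspaces of $A_L$ is Theorem~\ref{thm:PLroots2}, whose proof cites the present proposition. So your proposed resolution is circular in the paper's logical order.

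The paper's fix is purely algebraic and avoids $A_L$ entirely: use the ring homomorphism $M\mapsto M_\alpha$, for which $\gcd(L,M)_\alpha=\gcd(L_\alpha,M_\alpha)$. Since $(x^{\sigma^n}-x)_{\alpha^{-1}}=N(\alpha^{-1})x^{\sigma^n}-x=\lambda^{-1}x^{\sigma^n}-x$, one gets
\[
\gcd(L_\alpha,x^{\sigma^n}-x)=\gcd\bigl(L,(x^{\sigma^n}-x)_{\alpha^{-1}}\bigr)_{\alpha}=\gcd(L,x^{\sigma^n}-\lambda x)_{\alpha},
\]
and twisting preserves $\sigma$-degree. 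This is exactly the ``absorbing the twist into a reparametrization of $\lambda$'' that you mentioned as an alternative but then set aside; it is the step you need.
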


\begin{proof}
Suppose $t$ is a root of $P_L(y)$ with $N(t)=\lambda$. Let $\alpha$ be a fixed element of $\Fqn$ with $N(\alpha)=\lambda$. Then $t = \alpha z^{\sigma-1}$ for some $z\in \Fqn$, and $z$ is unique up to $\Fq^*$-multiplication (i.e. $t = \alpha z^{\sigma-1}=\alpha w^{\sigma-1}$ if and only if $z/w\in \Fq^*$).

Hence $0=P_L(t) = P_L(\alpha z^{\sigma-1}) = P_{L_\alpha}(z^{\sigma-1}) = L_{\alpha}(z)/z$, and thus the number of roots of $P_L(y)$ in $\Fqn$  having norm $\lambda$
is equal to the number of one-dimensional $\Fq$-subspaces of $\ker(L_\alpha)$, which is 
\[
\frac{q^{\deg_\sigma(\gcd(L_\alpha(x),x^{\sigma^n}-x))} - 1}{q-1}.
\]
Finally, it suffices to show that $\deg_\sigma(\gcd(L_\alpha(x),x^{\sigma^n}-x))=\deg_\sigma(\gcd(L(x), x^{\sigma^n}- \lambda x))$. Note that $(x^{\sigma^n}- x)_{\alpha^{-1}} = N(\alpha^{-1})x^{\sigma^n}-x = \lambda^{-1}x^{\sigma^n}-x$. Hence if 
\begin{align*}
D(x) &= \gcd(L(x), x^{\sigma^n}- \lambda x) \\
&= \gcd(L(x), (x^{\sigma^n}- x)_{\alpha^{-1}})\\
\text{ then } D_\alpha (x)&= \gcd(L_\alpha(x),x^{\sigma^n}-x),
\end{align*}
proving the claimed equality of degrees. Summing over $\lambda\in \Fq^*$ gives the desired result
(recall that $\gcd = \gcrc$ by Remark \ref{gcdrem}).
\end{proof}

Later we will use this to give a more useful formula for calculating the number of roots of $P_L(y)$.

Note that the one-dimensional subspaces of the kernel of $L(x)$ are in one-to-one correspondence with solutions of the system $P_L(y)=0, N(y)=1$, where $N(y) = y^{[n]}$ denotes the field norm from $\Fqn$ to $\Fq$.
%
%
%
%

\subsection{First Main Result}

We are now ready for our main results, which relate the number of roots of $L$ and $P_L$ to properties of the matrix $A_L$.
We emphasize again that the matrix $A_L$ is smaller than the Dickson matrix, although
having the same property.
Indeed, for $L\in \L$ of $\sigma$-degree $d$, the Dickson matrix is $n\times n$ whereas
$A_L$ is $d\times d$.

\begin{theorem}
\label{thm:nullAL}
Let $L\in \L$, and let $A_L$ be defined as in (\ref{eqn:AL}). Then for any $\alpha\in \Fqn^*$ we have
\[
\nul(L_{\alpha}) = \nul(A_L-\lambda I),
\]
where $\lambda = N(\alpha)$.
In particular, the nullity  of $L$ is equal to the nullity of $A_L-I$.
\end{theorem}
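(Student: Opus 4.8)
The plan is to prove the sharper statement that, for $\lambda=N(\alpha)\in\Fq^*$, both $\nul(L_\alpha)$ and $\nul(A_L-\lambda I)$ are equal to $\deg_\sigma(\gcrc(L,x^{\sigma^n}-\lambda x))$, and then to recover the theorem --- including its ``in particular'' clause --- by taking $\alpha=1$, so that $L_1=L$ and $\lambda=N(1)=1$. The equality $\nul(L_\alpha)=\deg_\sigma(\gcrc(L,x^{\sigma^n}-\lambda x))$ is essentially already contained in the proof of Proposition~\ref{prop:PLroots1}: by Proposition~\ref{prop:known}(vi) and Remark~\ref{gcdrem}, $\nul(L_\alpha)=\deg_\sigma(\gcrc(L_\alpha,x^{\sigma^n}-x))=\deg_\sigma(\gcd(L_\alpha,x^{\sigma^n}-x))$, and the homomorphism $M\mapsto M_\alpha$ together with the identity $(x^{\sigma^n}-x)_{\alpha^{-1}}=\lambda^{-1}x^{\sigma^n}-x$ shows that $\gcd(L_\alpha,x^{\sigma^n}-x)=\gcd(L,x^{\sigma^n}-\lambda x)_\alpha$, which has the same $\sigma$-degree as $\gcd(L,x^{\sigma^n}-\lambda x)=\gcrc(L,x^{\sigma^n}-\lambda x)$ since $M\mapsto M_\alpha$ preserves $\sigma$-degree. (If $\deg_\sigma L\ge n$, one first replaces $L$ by its remainder modulo $x^{\sigma^n}-x$, which changes neither side.)

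The new content is the identity $\nul(A_L-\lambda I)=\deg_\sigma(\gcrc(L,x^{\sigma^n}-\lambda x))$. Put $d=\deg_\sigma L$ and $h=x^{\sigma^n}-\lambda x$. The key observation is that, because $\lambda\in\Fq$, the polynomial $h$ is a polynomial in the central generators $x$ and $x^{\sigma^n}$ of $\L$ (Proposition~\ref{prop:known}(i)), hence lies in the centre of $\L$. By Proposition~\ref{prop:AL}, the matrix $A_L-\lambda I$ represents, with respect to the canonical basis of $\L_d$, the $\Fqn$-linear map $\phi_\lambda\colon M\mapsto h\circ M \modr L$ (here $h\circ M = x^{\sigma^n}\circ M-\lambda M$ and $\lambda M\modr L=\lambda M$ since $\deg_\sigma(\lambda M)<d$). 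Identifying $\L_d$ with the left $\Fqn$-module $\L/(\L\circ L)$ through remainders --- legitimate because $\L\circ L$ is a left ideal and taking remainders is $\Fqn$-linear --- the map $\phi_\lambda$ descends to left multiplication by $h$ on this quotient, so its image is the image there of the right ideal $h\circ\L$.

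Now I would compute that image. By centrality $h\circ\L=\L\circ h$, and by the B\'ezout property of the right-Euclidean domain $\L$ the left ideal $\L\circ h+\L\circ L$ equals $\L\circ\gcrc(L,h)$; hence $\im(\phi_\lambda)=(\L\circ\gcrc(L,h))/(\L\circ L)$. Since $\dim_{\Fqn}(\L/(\L\circ N))=\deg_\sigma N$ for every nonzero $N\in\L$ (the remainders of $\sigma$-degree less than $\deg_\sigma N$ form an $\Fqn$-space of that dimension, and reduction is $\Fqn$-linear), the chain $\L\circ L\subseteq\L\circ\gcrc(L,h)\subseteq\L$ yields $\rank(A_L-\lambda I)=\dim_{\Fqn}\im(\phi_\lambda)=d-\deg_\sigma(\gcrc(L,h))$, and therefore $\nul(A_L-\lambda I)=\deg_\sigma(\gcrc(L,x^{\sigma^n}-\lambda x))$, which combines with the first paragraph to finish the proof. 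The step I expect to require the most care is this identification of $A_L-\lambda I$ with left multiplication by the central element $h$ on $\L/(\L\circ L)$, together with the attendant bookkeeping of left versus right ideals and the noncommutative B\'ezout identity; once that is set up correctly, the remainder is a routine rank computation.
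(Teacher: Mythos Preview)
Your argument is correct. Both your proof and the paper's hinge on the same key observation: for $\lambda\in\Fq^*$ the element $h=x^{\sigma^n}-\lambda x$ is central, so the map represented by $A_L-\lambda I$ is left multiplication by $h$ on $\L_d$. From there the two proofs diverge. The paper computes the \emph{kernel} explicitly: writing $L=A\circ D$ and $h=B\circ D$ with $D=\gcrc(L,h)$, it uses the $\lclc$ of $A$ and $B$ to produce a fixed $F$ of $\sigma$-degree $d-\deg_\sigma D$ such that $M$ lies in the kernel iff $M=G\circ F$ with $\deg_\sigma G<\deg_\sigma D$, and then counts. You instead compute the \emph{image}: identifying $\L_d$ with the left $\Fqn$-module $\L/(\L\circ L)$, you observe that the image of left multiplication by the central $h$ is $(\L\circ h+\L\circ L)/(\L\circ L)=(\L\circ\gcrc(L,h))/(\L\circ L)$ by the noncommutative B\'ezout identity, and read off the rank from the chain of left ideals. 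Your route is more module-theoretic and avoids introducing the auxiliary factorisations $A,B,C,E,F,G$ and the $\lclc$; the paper's route is more explicit in that it actually parametrises the kernel. Both reductions of $\nul(L_\alpha)$ to $\deg_\sigma(\gcrc(L,x^{\sigma^n}-\lambda x))$ are the same, invoking Proposition~\ref{prop:known}(vi) and the homomorphism $M\mapsto M_\alpha$ exactly as in the proof of Proposition~\ref{prop:PLroots1}. One small remark: your parenthetical about reducing $L$ modulo $x^{\sigma^n}-x$ when $\deg_\sigma L\ge n$ is unnecessary, since your image computation and the $\gcrc$ formula both hold for $L$ of arbitrary $\sigma$-degree.
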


\begin{proof}
Suppose $\deg_\sigma(L)=d$. By Proposition \ref{prop:AL}, $A_L$ is the matrix associated to the linear transformation $\phi$ of $\L_d$ defined by
\[
\phi(M) = x^{\sigma^n}\circ M \modr L,
\]
and so $A_L-\lambda I$ is the matrix associated to $\phi-\lambda \cdot  id$, which is defined by
\[
(\phi-\lambda  \cdot  id)(M) = (x^{\sigma^n}-\lambda x)\circ M \modr L.
\]

Suppose $M\in \ker(\phi-\lambda  \cdot  id)$, which occurs if and only if $(x^{\sigma^n}-\lambda x)\circ M$ is a left composition of $L$. As $(x^{\sigma^n}-\lambda x)$ commutes with every element of $\L$
(cf. Proposition \ref{prop:known})
we have that $(x^{\sigma^n}-\lambda x)\circ M=M\circ (x^{\sigma^n}-\lambda x)$. Let $D = \gcrc(L,x^{\sigma^n}-\lambda x)$, and let $A,B$ be such that $L=A\circ D$, $x^{\sigma^n}-\lambda x = B\circ D$. Note that $\gcrc(A,B)=x$. Then
\begin{align*}
M\circ B\circ D &\equiv 0\modr A\circ D\\
\Leftrightarrow M\circ B &\equiv 0\modr A.
\end{align*}
Thus $M\circ B$ is a left composition of both $A$ and $B$, and so is a left composition of $\lclc(A,B)$, which we denote by $C$. Let $E,F$ be such that $C=E\circ A = F\circ B$. By Proposition \ref{prop:known} (iii) and (iv), we have that $\deg_\sigma(C)=\deg_\sigma(A)+\deg_\sigma(B)= \deg_\sigma(F)+\deg_\sigma(B)$, and hence $\deg_\sigma(F)=\deg_\sigma(A)$.

Then there exists a unique $G$ such that $M\circ B = G\circ C$. Now $M\circ B = G\circ C=G\circ F\circ B$, and so $M = G\circ F$. Thus $M\in \ker(\phi-\lambda  \cdot  id)$ if and only if $M$ is a left composition of $F$ of $\sigma$-degree at most $d-1$. 

Now $\deg_\sigma(F)=\deg_\sigma(A)$, and $\deg(A) = d - \deg_\sigma(D)$, implying that $\deg(G)\leq \deg_\sigma(D)-1$. Thus
\[
\dim\ker(\phi-\lambda  \cdot  id) = \deg_\sigma(D),
\]
which by Proposition \ref{prop:known} (see also the proof of Proposition 3)
is equal to $\nul(L_{\alpha})$.
\end{proof}

\subsection{Second Main Result}

\begin{theorem}
\label{thm:PLroots2}
The number of roots of $P_L$ in $\Fqn$ is given by
\[
\sum_{\lambda \in \Fq} \frac{q^{n_\lambda}-1}{q-1}, 
\]
where $n_\lambda$ is the dimension of the eigenspace of $A_L$ with eigenvalue $\lambda$.

The number of roots of $L$ in $\Fqn$ is given by $q^{n_1}$, i.e.,
the size of the eigenspace of $A_L$ corresponding to the eigenvalue $1$.
\end{theorem}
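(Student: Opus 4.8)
The plan is to obtain both statements as direct corollaries of Proposition~\ref{prop:PLroots1} and Theorem~\ref{thm:nullAL}, rewriting every quantity appearing there in terms of the eigenspaces of $A_L$. The assertion about $L$ is the quickest: since $L$ induces an $\Fq$-linear map on $\Fqn$, its roots in $\Fqn$ form an $\Fq$-subspace of dimension $\nul(L)$, so there are exactly $q^{\nul(L)}$ of them; and by the last line of Theorem~\ref{thm:nullAL} (the case $\alpha=1$, $\lambda=N(1)=1$) we have $\nul(L)=\nul(A_L-I)$, which is the dimension $n_1$ of the eigenspace of $A_L$ for the eigenvalue $1$. Hence $L$ has $q^{n_1}$ roots, and I anticipate no obstacle in this part.

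For $P_L$, I would start from Proposition~\ref{prop:PLroots1}, which gives the number of roots in $\Fqn$ as $\sum_{\lambda\in\Fq^*}\frac{q^{d_\lambda}-1}{q-1}$ with $d_\lambda=\deg_\sigma(\gcrc(L(x),x^{\sigma^n}-\lambda x))$. The heart of the argument is the identity $d_\lambda=n_\lambda$ for every $\lambda\in\Fq^*$, and this is already furnished by the proof of Theorem~\ref{thm:nullAL}: with $\phi$ the $\Fqn$-linear map on $\L_d$ of matrix $A_L$ (Proposition~\ref{prop:AL}) and $D=\gcrc(L,x^{\sigma^n}-\lambda x)$, that proof shows $\dim\ker(\phi-\lambda\cdot\id)=\deg_\sigma(D)=d_\lambda$, and $\ker(\phi-\lambda\cdot\id)$ is precisely the $\lambda$-eigenspace of $A_L$, of dimension $n_\lambda$. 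Substituting gives the number of roots of $P_L$ as $\sum_{\lambda\in\Fq^*}\frac{q^{n_\lambda}-1}{q-1}$.

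It remains only to absorb the $\lambda=0$ term into the sum at no cost: by Remark~\ref{detnorm}, $\det(A_L)=N((-1)^d a_0/a_d)$, which is nonzero since $a_0,a_d\ne 0$, so $A_L$ is invertible, $0$ is not an eigenvalue, $n_0=0$, and $\frac{q^{n_0}-1}{q-1}=0$. Thus $\sum_{\lambda\in\Fq^*}=\sum_{\lambda\in\Fq}$ and we obtain the stated formula. The one point deserving a moment's care — the main obstacle, such as it is — is checking that the module-theoretic computation inside the proof of Theorem~\ref{thm:nullAL} really does compute $\deg_\sigma(\gcrc(L,x^{\sigma^n}-\lambda x))$ for all nonzero $\lambda$ and not merely for $\lambda=1$; inspection shows that argument never used the value of $\lambda$, so no extra case analysis is needed and the theorem follows as an assembly of the two preceding results.
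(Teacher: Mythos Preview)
Your proposal is correct and follows the same route as the paper's own proof, which simply cites Proposition~\ref{prop:PLroots1} and Theorem~\ref{thm:nullAL} and adds the note that the sum is over eigenvalues in $\Fq$. Your write-up is in fact more explicit than the paper on two points the paper glosses over: (i) that the computation inside the proof of Theorem~\ref{thm:nullAL} establishes $\dim\ker(\phi-\lambda\cdot\id)=\deg_\sigma(\gcrc(L,x^{\sigma^n}-\lambda x))$ for every $\lambda\in\Fq^*$, and (ii) that the $\lambda=0$ term can be inserted at no cost because $\det(A_L)=N((-1)^d a_0/a_d)\ne 0$ under the standing hypothesis $a_0\ne 0$.
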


\begin{proof}
This follows immediately from Proposition \ref{prop:PLroots1} and Theorem \ref{thm:nullAL}.
Note that we only sum over eigenvalues in $\Fq$.
\end{proof}

\begin{remark}
In \cite{VoGiZi} it was shown that the number of roots of $P_L$ was related to the eigenspaces of a linear map acting on the vector space of roots of $L$ over the algebraic closure of $\Fqn$. However  there was no explicit way of obtaining this linear map directly from the coefficients of $L$, whereas in this paper $A_L$ does provide such a method.
\end{remark}

\subsection{Third Main Result}

\begin{theorem}
\label{thm:splits}
Let $L\in \L$ have $\sigma$-degree $d$. Then $P_L$ has $\frac{q^d-1}{q-1}$ roots in $\Fqn$ if and only if
\[
A_L=\lambda I
\]
for some $\lambda\in \Fq$. Also, $L$ has $q^d$ roots in $\Fqn$ if and only if
\[
A_L= I.
\]
\end{theorem}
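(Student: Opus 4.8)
The plan is to deduce Theorem \ref{thm:splits} directly from the previous two main results, treating it as the extreme case where the nullity/root count is as large as possible. First I would handle the statement about $L$. By Theorem \ref{thm:nullAL}, $L$ has $q^d$ roots in $\Fqn$ if and only if $\nul(L) = d$, which happens if and only if $\nul(A_L - I) = d$. Since $A_L$ is a $d\times d$ matrix, $\nul(A_L - I) = d$ is equivalent to $A_L - I = 0$, i.e.\ $A_L = I$. This settles the second claim immediately.

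Next I would treat the statement about $P_L$. By Theorem \ref{thm:PLroots2}, the number of roots of $P_L$ in $\Fqn$ is $\sum_{\lambda\in\Fq} \frac{q^{n_\lambda}-1}{q-1}$, where $n_\lambda$ is the dimension of the $\lambda$-eigenspace of $A_L$. Since the eigenspaces for distinct eigenvalues are independent, $\sum_{\lambda\in\Fq} n_\lambda \le d$, with equality only if $A_L$ is diagonalizable over $\Fq$ with all eigenvalues in $\Fq$. The function $\sum_\lambda \frac{q^{n_\lambda}-1}{q-1}$ is a sum of superadditive terms, so it is maximized, subject to $\sum_\lambda n_\lambda \le d$, precisely when all the "mass" is concentrated in a single $n_\lambda$; concretely, $\frac{q^{a}-1}{q-1} + \frac{q^{b}-1}{q-1} \le \frac{q^{a+b}-1}{q-1}$ for nonnegative integers $a,b$, with equality if and only if $a=0$ or $b=0$. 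Hence the maximum value $\frac{q^d-1}{q-1}$ is attained if and only if there is a single $\lambda\in\Fq$ with $n_\lambda = d$ and $n_\mu = 0$ for all $\mu \ne \lambda$. But $n_\lambda = d$ for a $d\times d$ matrix forces $A_L - \lambda I = 0$, i.e.\ $A_L = \lambda I$ (and this $\lambda$ is automatically nonzero since $\det(A_L) = N((-1)^d a_0/a_d) \ne 0$ by Remark \ref{detnorm} and the standing assumption $a_0 \ne 0$). Conversely, if $A_L = \lambda I$ then $n_\lambda = d$ and $P_L$ has exactly $\frac{q^d-1}{q-1}$ roots.

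The only genuinely delicate point is the combinatorial optimization step: one must argue both that $\frac{q^d-1}{q-1}$ is an upper bound for the root count given the dimension constraint $\sum_\lambda n_\lambda \le d$, and that equality pins down the eigenstructure completely. This is elementary once phrased via the inequality $\frac{q^a-1}{q-1}+\frac{q^b-1}{q-1}\le\frac{q^{a+b}-1}{q-1}$, which follows from $q^{a+b} - 1 - (q^a - 1) - (q^b - 1) = (q^a-1)(q^b-1) \ge 0$; iterating it across all $\lambda\in\Fq$ with $n_\lambda > 0$ collapses the sum to a single term exactly when at most one $n_\lambda$ is nonzero. I would also note in passing that the $P_L$ statement could alternatively be derived from the $L$ statement by applying the latter to the twisted polynomials $L_\alpha$ and using $A_{L_\alpha} = N(\alpha)^{-1}A_L$ together with Proposition \ref{prop:PLroots1}, but the eigenspace-dimension argument via Theorem \ref{thm:PLroots2} is cleaner and self-contained.
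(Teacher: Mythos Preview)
Your proof is correct and follows essentially the same route as the paper: deduce the $L$ statement from Theorem~\ref{thm:nullAL} via $\nul(A_L-I)=d\Leftrightarrow A_L=I$, and the $P_L$ statement from Theorem~\ref{thm:PLroots2} by arguing that the root count $\sum_\lambda (q^{n_\lambda}-1)/(q-1)$ attains $(q^d-1)/(q-1)$ exactly when a single eigenspace has dimension $d$. The paper's own proof is terser, simply asserting the equivalence between $(q^d-1)/(q-1)$ roots and a $d$-dimensional eigenspace; your superadditivity inequality $(q^a-1)(q^b-1)\ge 0$ supplies the justification the paper leaves implicit, and your remark that $\lambda\ne 0$ because $\det(A_L)\ne 0$ is a useful clarification.
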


\begin{proof}
By Theorem \ref{thm:PLroots2}, $P_L$ has $\frac{q^d-1}{q-1}$ roots if and only if $A_L$ has an eigenspace of dimension $d$, which occurs if and only if $A_L$ is a multiple of the identity.

Furthermore, $L$ has $q^d$ roots in $\Fqn$ if and only if it has nullity $d$, which by
Theorems \ref{thm:nullAL} and \ref{thm:PLroots2} occurs if and only if $A_L=I$.
\end{proof}

An immediate corollary is the following known result \cite[Theorem 10]{GoQu2009a}, which was used in \cite{SheekeyMRD} to construct new maximum rank-distance codes.
\begin{corollary}\label{cor:norm}
If $L$ is a $\sigma$-linearized polynomial of $\sigma$-degree $d$ such that $L$ has $q^d$ roots, then $N(a_0)=(-1)^{nd}N(a_d)$.
\end{corollary}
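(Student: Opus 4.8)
The plan is to deduce Corollary \ref{cor:norm} directly from Theorem \ref{thm:splits} together with the determinant formula in Remark \ref{detnorm}. The hypothesis is that $L$ has $\sigma$-degree $d$ and has $q^d$ roots in $\Fqn$; by Theorem \ref{thm:splits} this forces $A_L = I$, the $d\times d$ identity matrix. Taking determinants of both sides gives $\det(A_L) = 1$.

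Next I would invoke Remark \ref{detnorm}, which records that $\det(A_L) = N(\det(C_L)) = N\bigl((-1)^d a_0/a_d\bigr)$. Combining this with $\det(A_L)=1$ yields
\[
N\!\left((-1)^d \frac{a_0}{a_d}\right) = 1.
\]
Since $N$ is multiplicative and $N((-1)^d) = ((-1)^d)^{1+\sigma+\cdots+\sigma^{n-1}} = (-1)^{nd}$ (as $-1\in\Fq$ is fixed by $\sigma$), the left-hand side equals $(-1)^{nd} N(a_0)/N(a_d)$. Hence $(-1)^{nd} N(a_0) = N(a_d)$, i.e. $N(a_0) = (-1)^{nd} N(a_d)$, which is the claimed identity (using $(-1)^{nd} = (-1)^{-nd}$).

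There is really no serious obstacle here: the corollary is a one-line consequence once Theorem \ref{thm:splits} is in hand, and the only points needing a word of care are that $-1$ lies in $\Fq$ (so its norm is $(-1)^n$, not $1$) and that we are permitted to assume $a_0\neq 0$ by the standing hypothesis of the paper, so the quotient $a_0/a_d$ makes sense and has nonzero norm. The main conceptual content has already been expended in proving Theorems \ref{thm:nullAL}, \ref{thm:PLroots2} and \ref{thm:splits}; the corollary merely extracts the determinant information that was implicit in the equation $A_L = I$.
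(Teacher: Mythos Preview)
Your proof is correct and is exactly the argument the paper intends: the corollary is stated as an immediate consequence of Theorem \ref{thm:splits}, and the only additional input needed is the determinant formula $\det(A_L)=N((-1)^d a_0/a_d)$ from Remark \ref{detnorm}, which you apply precisely as required.
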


\begin{remark}
Note that if $x^\sigma = x^q$, then this theorem gives us criteria for when $L$ or $P_L$ split completely over $\Fqn$.
\end{remark}

In the next sections we will use our main results to 
develop explicit criteria for the number of roots of 
$L$ and $P_L$ in the case of $\sigma$-degree two and three.
Our method will work for any $\sigma$-degree $d$, and any $n$, however the
expressions become more complicated as $d$ increases.

\section{Criteria for the number of roots for  $\sigma$-degree $2$}\label{sectd2}

\subsection{Recursions for the matrix $C_k$}
Let $L = ax+bx^\sigma+cx^{\sigma^2}$ with $a,b,c\in \Fqn$, $ac\ne 0$, and let
\[
C = C_L = \npmatrix{0&-a/c\\1&-b/c}.
\]
We define $C_0$ to be the $k\times k$ identity matrix, and for $k\geq 1$ define
\[
C_k = CC^{\sigma}\cdots C^{\sigma^{k-1}},
\]
and note that  $C_n = A_L$.
Observe that
\[
C_k = C_{k-1}C^{\sigma^{k-1}} = CC_{k-1}^{\sigma}.
\]
These two relations will allow us to obtain straightforward recursions for the entries of $C_k$. We make the following substitutions in order to simplify the calculations. Let
\begin{align*}
u &= \frac{a^\sigma c}{b^{\sigma+1}}
\end{align*}
and define matrices 
\[
X = \npmatrix{a/b&0\\0&1}, \quad Z_k=\npmatrix{(b/c)^{[k-1]}&0\\0&(b/c)^{[k]}}
\]
and define $Y_k$ by $C_k = XY_kZ_k$.


\begin{remark}\label{b0}
Although the introduction of $u$ suggests that we must assume that $b\ne 0$, this is not the case. We work with this expression formally, purely for convenience and ease of notation and calculation. The matrix $A_L$ contains no expressions with a denominator involving $b$, and so the same is true of all determinants and minors calculated in the sequel. Thus it is safe to perform calculations using $u$, so long as one clears denominators before evaluating an expression.
\end{remark}

It is easy to check that
\[
Y_1 = \npmatrix{0&-1\\1&-1},
\]

\begin{lemma}
\label{prop:Yrec}
Let $k\ge 2$, then
$Y_k$ satisfies the recursions
\begin{align*}
Y_k &= Y_{k-1}\npmatrix{0&-u\\1&-1}^{\sigma^{k-2}}\\
Y_k&= \npmatrix{0&-1\\u&-1}Y_{k-1}^\sigma.
\end{align*}
Thus $Y_k$ is a matrix each of whose entries is a polynomial in the indeterminate $u$. 
\end{lemma}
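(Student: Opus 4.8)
The plan is to prove the two recursions for $Y_k$ directly from the corresponding recursions for $C_k$, namely $C_k = C_{k-1}C^{\sigma^{k-1}}$ and $C_k = C C_{k-1}^\sigma$, by substituting the factorization $C_k = X Y_k Z_k$ and simplifying. For the first recursion, I would write $X Y_k Z_k = X Y_{k-1} Z_{k-1} C^{\sigma^{k-1}}$, cancel the common left factor $X$, and then isolate $Y_k = Y_{k-1} \bigl( Z_{k-1} C^{\sigma^{k-1}} Z_k^{-1} \bigr)$. The main computational task is to check that $Z_{k-1} C^{\sigma^{k-1}} Z_k^{-1}$ equals $\npmatrix{0 & -u \\ 1 & -1}^{\sigma^{k-2}}$. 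This is a routine $2\times 2$ diagonal-conjugation computation: $C^{\sigma^{k-1}} = \npmatrix{0 & -(a/c)^{\sigma^{k-1}} \\ 1 & -(b/c)^{\sigma^{k-1}}}$, and conjugating by the diagonal matrices $Z_{k-1}$ (on the left) and $Z_k^{-1}$ (on the right) rescales the off-diagonal entries; one then verifies that the resulting entries are exactly $\sigma^{k-2}$-powers of $0, -u, 1, -1$, using $u = a^\sigma c / b^{\sigma+1}$ and the definitions $y^{[i]} = y^{(\sigma^i-1)/(\sigma-1)}$, so that $(b/c)^{[k-1]} / (b/c)^{[k]}$ and similar ratios collapse to powers of $b/c$ and of $a/c$ that recombine into $u^{\sigma^{k-2}}$.

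For the second recursion I would proceed symmetrically: from $C_k = C C_{k-1}^\sigma$ I get $X Y_k Z_k = C (X Y_{k-1} Z_{k-1})^\sigma = C X^\sigma Y_{k-1}^\sigma Z_{k-1}^\sigma$. Here the algebra is slightly less immediate because $C$ and $X^\sigma$ do not commute, but $C X^\sigma$ is still an explicit $2\times 2$ matrix, and I would factor it as $X \cdot N \cdot (\text{diagonal})$ for an appropriate constant matrix $N$; comparing with $X Y_k Z_k$ and noting $Z_{k-1}^\sigma$ is diagonal, I would solve for $Y_k$ and check that $N$ works out to $\npmatrix{0 & -1 \\ u & -1}$ and that the leftover diagonal factors match $Z_k$ on the right. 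Again this reduces to bookkeeping with the exponents in $y^{[i]}$ and the definition of $u$. As a sanity check I would confirm both recursions against the base case $Y_1 = \npmatrix{0 & -1 \\ 1 & -1}$ and the known $C_1 = C = X Y_1 Z_1$.

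The final sentence of the lemma — that every entry of $Y_k$ is a polynomial in $u$ — then follows by an easy induction on $k$ using either recursion: $Y_1$ has polynomial entries in $u$ (indeed constant), and if $Y_{k-1}$ has entries that are polynomials in $u$, then the first recursion expresses $Y_k$ as a product of $Y_{k-1}$ with a matrix whose entries are polynomials in $u^{\sigma^{k-2}}$; but since the entries of $Y_{k-1}$ are polynomials in $u$ with coefficients in the prime field (this should be tracked through the induction — the constant matrices appearing have entries in $\{0,\pm 1\}$), applying $\sigma$ fixes those coefficients, and one observes that the recursion actually produces a polynomial in $u$ rather than in $u^{\sigma^{k-2}}$ because the $\sigma$-twisting is consistent across the whole product $C C^\sigma \cdots C^{\sigma^{k-1}}$. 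I would phrase this carefully: the cleanest statement is that $Y_k = Y_k(u)$ for a fixed integer polynomial $Y_k$, and the two recursions are compatible with this once one checks that $\npmatrix{0 & -u \\ 1 & -1}$ specialized at $u^{\sigma^{k-2}}$ agrees with applying $\sigma^{k-2}$ to $Y_k(u)$ evaluated appropriately.

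The main obstacle I anticipate is purely notational rather than conceptual: keeping the exponents $[k-1]$, $[k]$, $\sigma^{k-1}$, $\sigma^{k-2}$ straight and verifying that the telescoping ratios of $(b/c)^{[i]}$ terms combine correctly with the $(a/c)$ and $(b/c)$ factors from $C^{\sigma^{k-1}}$ to yield precisely $u^{\sigma^{k-2}}$ and not some neighbouring power. I would handle this by introducing the shorthand $v = b/c$ and $w = a/c$, noting $u = w^\sigma / v^{\sigma} \cdot (\text{correction})$ — more precisely rewriting $u$ in terms of $v, w$ — and then doing the diagonal conjugation once, slowly, in these variables; everything else is a short induction.
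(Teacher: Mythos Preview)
Your approach is essentially the same as the paper's: for the first recursion the paper isolates $Y_k = Y_{k-1}\bigl(Z_{k-1}C^{\sigma^{k-1}}Z_k^{-1}\bigr)$ and verifies the inner factor, exactly as you propose. For the second recursion the paper uses the simplifying observation you did not quite reach, namely that $Z_{k-1}^\sigma Z_k^{-1} = (c/b)I$ is a \emph{scalar} matrix; this collapses your ``factor $CX^\sigma$ as $X\cdot N\cdot(\text{diagonal})$'' step to the single verification that $(c/b)X^{-1}CX^\sigma = \npmatrix{0&-1\\u&-1}$, and avoids the non-commutativity worry entirely.
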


\begin{proof}
We have that 
\[
Y_k =  X^{-1}C_kZ_k^{-1}=X^{-1}C_{k-1}C^{\sigma^{k-1}}Z_k^{-1}
= X^{-1}(XY_{k-1}Z_{k-1})C^{\sigma^{k-1}}Z_k^{-1},
\]
and so the proof of the first follows from a simple verification that $Z_{k-1}C^{\sigma^{k-1}}Z_k^{-1}$ is the stated matrix 
$\npmatrix{0&-u\\1&-1}^{\sigma^{k-2}}$.

Similarly we have
\[
Y_k =X^{-1}CC_{k-1}^\sigma Z_k^{-1}= (X^{-1}CX^\sigma) Y_{k-1}^\sigma (Z_{k-1}^\sigma Z_k^{-1})
\]
and the proof of the second follows from a verification that $Z_{k-1}^\sigma Z_k^{-1}=(c/b)I$, and that $(c/b)X^{-1}CX^\sigma$ is the stated matrix
$\npmatrix{0&-1\\u&-1}$.

%
%
%
%
\end{proof}

%

We will write the second column of $Y_k$ as $(F_k,G_k)^T$, where $F_k$ and $G_k$ are functions of $u$. 
In other words, we define $G_k$ to be the $(2,2)$ entry of $Y_k$, and define
$F_k$ to be the $(1,2)$ entry of $Y_k$.
Then $G_0=1$, $G_1=-1$ and $G_2=1-u$.
It is clear from the first recursion 
in Lemma \ref{prop:Yrec} that
\[
Y_k = \npmatrix{F_{k-1}&F_k\\G_{k-1}&G_k},
\]
and that
\begin{equation}
\label{eqn:R2a}
G_k+G_{k-1}+u^{\sigma^{k-2}}G_{k-2} = 0.
\end{equation}
Furthermore, the second recursion gives us that $F_k = -G_{k-1}^\sigma$, and $G_k = uF_{k-1}^\sigma -G_{k-1}^\sigma$. Together this gives
\begin{equation}
\label{eqn:R2b}
G_k+G_{k-1}^\sigma+uG_{k-2}^{\sigma^2} = 0.
\end{equation}

We observe that these recursions are 
essentially the same as those found by Helleseth-Kholosha in \cite{HellKhol} in the special case $q$ even, although their method was very different from ours.

Therefore we can rewrite $Y_k$ in terms of the sequence $(G_k)$, as follows:
\[
Y_k = \npmatrix{-G_{k-2}^\sigma&-G_{k-1}^\sigma\\
G_{k-1}&G_{k}}
\]

Note that if $k=n$, (\ref{eqn:R2a}) and (\ref{eqn:R2b}) imply that
\[
G_n^{\sigma^2}-G_n = G_{n-1}^\sigma - G_{n-1}^{\sigma^2}.
\]

Thus, as $C_n = A_L$ by definition, we have $A_L = XY_nZ_n$, which shows the following.
\begin{proposition}
\label{prop:ALmat}
Let $L = ax+bx^\sigma+cx^{\sigma^2}$ with $a,b,c\in \Fqn$, $ac\ne 0$, and let $G_k$ be as defined above. Then
\[
A_L =  N(b/c)\npmatrix{-u^{\sigma^{-1}}G_{n-2}^\sigma&-(a/b)G_{n-1}^\sigma\\
(c/b)^{\sigma^{-1}}G_{n-1}&G_{n}}
\]
\end{proposition}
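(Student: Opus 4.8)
The plan is to start from the definition $A_L = C_n = XY_nZ_n$ established just above the statement, substitute the closed form for $Y_n$ in terms of the sequence $(G_k)$, and then multiply out the three matrices explicitly. Concretely, I would first record that by Lemma~\ref{prop:Yrec} (and the subsequent reformulation) we have
\[
Y_n = \npmatrix{-G_{n-2}^\sigma & -G_{n-1}^\sigma \\ G_{n-1} & G_n},
\]
where here $G_{n-2}^\sigma$ really means $G_{n-2}$ evaluated in the indeterminate $u$ and then acted on by $\sigma$, or equivalently we may keep track of the exponent shifts $\sigma^{-1}$ that will appear once $Z_n$ is brought in. The key bookkeeping point is the exponent $[k-1]=\tfrac{\sigma^{k-1}-1}{\sigma-1}$ appearing in $Z_k$: for $k=n$ the entry $(b/c)^{[n-1]}$ can be rewritten using $(b/c)^{[n]} = N(b/c)$ and the identity $y^{[n]} = y^{[n-1]}\cdot y^{\sigma^{n-1}}$, so that $(b/c)^{[n-1]} = N(b/c)\,(c/b)^{\sigma^{n-1}} = N(b/c)\,(c/b)^{\sigma^{-1}}$ using $\sigma^n = \mathrm{id}$. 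Likewise $(b/c)^{[n]} = N(b/c)$ directly.

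With those rewrites in hand I would simply carry out the product $X Y_n Z_n$ with
\[
X = \npmatrix{a/b & 0 \\ 0 & 1}, \qquad Z_n = \npmatrix{N(b/c)\,(c/b)^{\sigma^{-1}} & 0 \\ 0 & N(b/c)}.
\]
Left-multiplication by $X$ scales the first row by $a/b$; right-multiplication by $Z_n$ scales the first column by $N(b/c)(c/b)^{\sigma^{-1}}$ and the second column by $N(b/c)$. Factoring the common $N(b/c)$ out front, the $(1,1)$ entry becomes $(a/b)\cdot(-G_{n-2}^\sigma)\cdot(c/b)^{\sigma^{-1}}$; using $u = a^\sigma c/b^{\sigma+1}$ one checks $(a/b)(c/b)^{\sigma^{-1}} = u^{\sigma^{-1}}$ (apply $\sigma^{-1}$ to $u = a^\sigma c / b^{\sigma+1}$ to get $u^{\sigma^{-1}} = a\, c^{\sigma^{-1}}/b^{\,1+\sigma^{-1}} = (a/b)(c/b)^{\sigma^{-1}}$), giving $-u^{\sigma^{-1}}G_{n-2}^\sigma$. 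The $(1,2)$ entry is $(a/b)(-G_{n-1}^\sigma)$, the $(2,1)$ entry is $(c/b)^{\sigma^{-1}}G_{n-1}$, and the $(2,2)$ entry is $G_n$, all after pulling out $N(b/c)$. This is exactly the claimed matrix.

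There is essentially no conceptual obstacle here; the statement is a direct corollary of Lemma~\ref{prop:Yrec} together with the definition $A_L = C_n$. The only thing that needs genuine care — and the step I would treat as the ``main obstacle'' — is the exponent arithmetic: correctly simplifying the bracket exponent $(b/c)^{[n-1]}$ via $[n-1] = [n] - \sigma^{n-1}$ and reducing $\sigma^{n-1} = \sigma^{-1}$, and confirming the small identity $(a/b)(c/b)^{\sigma^{-1}} = u^{\sigma^{-1}}$. Per Remark~\ref{b0} I would also note in passing that, although $u$ and the factors $a/b$, $c/b$ are written with $b$ in a denominator, all denominators cancel in the final entries of $A_L$ (each entry of $N(b/c)\,Y_n\,\text{-scaled}$ is a genuine polynomial expression in $a,b,c$), so the formula is valid even when $b = 0$, interpreting the right-hand side formally. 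With these remarks the proof reduces to the routine $2\times 2$ matrix multiplication sketched above.
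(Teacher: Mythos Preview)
Your proposal is correct and follows exactly the approach the paper intends: the paper's ``proof'' is simply the sentence ``Thus, as $C_n = A_L$ by definition, we have $A_L = XY_nZ_n$, which shows the following,'' and you have correctly supplied the omitted matrix multiplication together with the exponent identities $(b/c)^{[n-1]} = N(b/c)(c/b)^{\sigma^{-1}}$ and $(a/b)(c/b)^{\sigma^{-1}} = u^{\sigma^{-1}}$. There is nothing to add.
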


To calculate the number of roots of $L$ and $P_L$, we need to know the characteristic polynomial of $A_L$. 
Directly from Proposition \ref{prop:ALmat} we get
\begin{align*}
\det(A_L) &= N(b/c)^2 u^{\sigma^{-1}}(G_{n-1}^{\sigma+1} - G_nG_{n-2}^\sigma)\\
\Tr(A_L) &= N(b/c)(G_n-u^{\sigma^{-1}}G_{n-2}^\sigma)\\
&= N(b/c)(G_n+G_{n}^{\sigma}+G_{n-1}^\sigma).
\end{align*}
Note that $\det(A_L) = N(\det(C))$ in general, and so for $d=2$ we get
\[
\det(A_L) = N(a/c).
\]
Thus we have that
\[
N(b/c)^2 u^{\sigma^{-1}}(G_{n-1}^{\sigma+1} - G_nG_{n-2}^\sigma) = N(a/c),
\]
and so
\[
u^{\sigma^{-1}}(G_{n-1}^{\sigma+1} - G_nG_{n-2}^\sigma) = N(u).
\]
This proves the following. Recall that $\chi_L$ is the characteristic 
polynomial of $A_L$.

\begin{proposition}
\label{prop:charAL}
Let $L = ax+bx^\sigma+cx^{\sigma^2}$ with $a,b,c\in \Fqn$, $ac\ne 0$, and let $G_k$ be as defined above. Let $\mu = N(b/c)$. Then
\[
\chi_{L}(\mu x) =\mu^2\left(x^2 -  (G_n+G_n^{\sigma}+G_{n-1}^{\sigma})x + N(u)\right).
\]
\end{proposition}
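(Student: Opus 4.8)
The plan is to exploit the fact that $A_L$ is only a $2\times 2$ matrix, so its characteristic polynomial is pinned down entirely by its trace and determinant via $\chi_L(x) = x^2 - \Tr(A_L)x + \det(A_L)$. Both of these quantities have effectively been assembled in the computation immediately preceding the statement, starting from the explicit description of $A_L$ in Proposition \ref{prop:ALmat}, so the proof is mainly a matter of packaging those facts and then applying the substitution $x \mapsto \mu x$.

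Concretely, I would proceed in three steps. First, reading off the diagonal of the matrix in Proposition \ref{prop:ALmat} gives $\Tr(A_L) = \mu(G_n - u^{\sigma^{-1}}G_{n-2}^\sigma)$, and this should be rewritten as $\mu(G_n + G_n^\sigma + G_{n-1}^\sigma)$ by invoking the recursion (\ref{eqn:R2a}) at $k = n$: since $\sigma^n = \mathrm{id}$ one has $u^{\sigma^{n-2}} = u^{\sigma^{-2}}$, and applying $\sigma$ to $G_n + G_{n-1} + u^{\sigma^{-2}}G_{n-2} = 0$ yields $G_n^\sigma + G_{n-1}^\sigma = -u^{\sigma^{-1}}G_{n-2}^\sigma$. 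Second, for the constant term I would use Remark \ref{detnorm}, namely $\det(A_L) = N(\det C_L)$; since $\det C_L = a/c$ when $d = 2$, this gives $\det(A_L) = N(a/c)$. (Computing $\det(A_L)$ directly from Proposition \ref{prop:ALmat} instead yields $\mu^2 u^{\sigma^{-1}}(G_{n-1}^{\sigma+1} - G_nG_{n-2}^\sigma)$, and comparing the two expressions reproves the identity $u^{\sigma^{-1}}(G_{n-1}^{\sigma+1} - G_nG_{n-2}^\sigma) = N(u)$, but this is not needed for the statement.) Third, I would record the elementary norm identity $N(a/c) = \mu^2 N(u)$: since $N$ is $\sigma$-invariant and multiplicative, $N(u) = N(a^\sigma c/b^{\sigma+1}) = N(a)N(c)/N(b)^2$, while $\mu^2 = N(b)^2/N(c)^2$, so $\mu^2 N(u) = N(a)/N(c) = N(a/c)$.

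Putting these together, $\chi_L(x) = x^2 - \mu(G_n + G_n^\sigma + G_{n-1}^\sigma)x + \mu^2 N(u)$, and substituting $x \mapsto \mu x$ and pulling out $\mu^2$ gives exactly the claimed formula for $\chi_L(\mu x)$. There is essentially no obstacle here; the only point requiring a little care is the index bookkeeping modulo $n$ in the trace simplification — one must use $\sigma^n = \mathrm{id}$ to turn the $\sigma^{n-2}$ appearing in (\ref{eqn:R2a}) into $\sigma^{-2}$ — together with a sanity check that the recursions (\ref{eqn:R2a}) and (\ref{eqn:R2b}) are being applied at the correct value of $k$.
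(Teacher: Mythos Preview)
Your proposal is correct and matches the paper's own argument essentially line for line: the paper likewise reads off $\Tr(A_L)$ and $\det(A_L)$ from Proposition~\ref{prop:ALmat}, simplifies the trace via recursion~(\ref{eqn:R2a}) at $k=n$ (using $\sigma^n=\id$), identifies $\det(A_L)=N(a/c)$ via Remark~\ref{detnorm}, and uses the norm identity $N(a/c)=\mu^2 N(u)$. The only cosmetic difference is that the paper also records the by-product $u^{\sigma^{-1}}(G_{n-1}^{\sigma+1}-G_nG_{n-2}^\sigma)=N(u)$ by comparing the two determinant expressions, which you correctly note is not needed for the statement itself.
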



\subsection{Criteria for number of roots of $P_L$}

If $\deg_\sigma(L) = 2$, then $A_L$ is a $2\times 2$ matrix. Thus 
\[
\chi_L(x) = x^2 - \Tr(A_L)x+\det(A_L).
\]
By Theorem \ref{thm:PLroots2}, the number of roots of $P_L$ in $\Fqn$ is determined by the dimension of the eigenspaces of $A_L$ corresponding to eigenvalues in $\Fq$. Hence if $\chi_L(x)$ has two distinct roots in $\Fq$, then $P_L$ has two roots in $\Fqn$. If $\chi_L(x)$ has no roots in $\Fq$, then $P_L$ has no roots in $\Fqn$. If 
$\chi_L(x)$ has a double root in $\Fq$, then $P_L$ has either $1$ or $q+1$ root(s) in $\Fqn$; if $A_L$ is a scalar multiple of the identity then $P_L$ has $q+1$ roots, otherwise it has $1$ root. Hence we have the following.

If $q$ is odd, let 
\begin{align*}
\Delta_L&= \Tr(A_L)^2 - 4\det(A_L)\\
&= N(b/c)^2((G_n+G_n^{\sigma}+G_{n-1}^{\sigma})^2 -4N(u))
\end{align*}

If $q$ is even, let
\begin{align*}
\Lambda_L &=  \det(A_L)/\Tr(A_L)^2
\end{align*}
and let $T_0$ denote the absolute trace, i.e. the trace from $\Fqn$ to $\FF_2$.

\begin{theorem} 
\label{thm:PLcriteria2}
Let $L(x) = ax+bx^\sigma+cx^{\sigma^2}\in \Fqn[x]$. Let $Z_L$ denote the number of solutions to $P_L(x) = a+bx+cx^{\sigma+1}=0$ in $\FF_{q^n}$. For any $q$, $Z_L =q+1$ if and only if $G_{n-1}=0$ and $G_n\in \Fq$. 

If $q$ is odd and $b\ne 0$, then 
\begin{itemize}
\item
$Z_L =q+1$ if and only if $\Delta_L=0$ and $G_{n-1}= 0$.
\item
$Z_L =1$ if and only if $\Delta_L=0$ and $G_{n-1}\ne 0$.
\item
$Z_L =2$ if and only if $\Delta_L$ is a non-zero square in $\Fq$.
\item
$Z_L =0$ if and only if $\Delta_L$ is a non-square in $\Fq$.
\end{itemize}


If $q$ is even and $b\ne 0$, then 
\begin{itemize}
\item
$Z_L =q+1$ if and only if $G_{n-1}=0$ and $G_{n}\in \Fq$.
\item
$Z_L =1$ if and only if $G_{n-1}\ne 0$ and $G_{n}\in \Fq$.
\item
$Z_L =2$ if and only if $G_n\notin \Fq$ and $T_0(\Lambda_L)=0$.
\item
$Z_L =0$ if and only if $G_n\notin \Fq$ and $T_0(\Lambda_L)=1$.
%
%
\end{itemize}


If $b=0$ and $n$ is odd, then $Z_L = 1$ if $N(-a/c)$ is a nonzero square in $\Fq$, and $Z_L=0$ otherwise.

If $b=0$ and $n$ is even, then $Z_L = q+1$ if $N_{q^n:q^2}(-a/c)\in \Fq$, and $Z_L=0$ otherwise.

\end{theorem}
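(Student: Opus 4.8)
The plan is to derive everything from Theorem \ref{thm:PLroots2} together with the explicit description of $A_L$ in Proposition \ref{prop:ALmat} and its characteristic polynomial in Proposition \ref{prop:charAL}. By Theorem \ref{thm:PLroots2}, $Z_L = \sum_{\lambda\in\Fq}(q^{n_\lambda}-1)/(q-1)$ where $n_\lambda$ is the dimension of the eigenspace of $A_L$ for $\lambda$. Since $A_L$ is $2\times 2$, the only possibilities are: $A_L$ is scalar (giving one eigenvalue in $\Fq$ with $n_\lambda=2$, so $Z_L=q+1$); $\chi_L$ has two distinct roots in $\Fq$ (so $Z_L=2$); $\chi_L$ has a repeated root in $\Fq$ but $A_L$ is not scalar (so $Z_L=1$); or $\chi_L$ is irreducible over $\Fq$ (so $Z_L=0$). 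So the whole proof is a bookkeeping exercise translating these four cases into the stated coefficient conditions, using that $\chi_L$ has coefficients in $\Fq$ by Theorem \ref{lem:charq}.

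First I would treat the $b\neq 0$ case. The key observation is that $A_L$ is a scalar multiple of $I$ if and only if its two off-diagonal entries vanish and its two diagonal entries are equal. From Proposition \ref{prop:ALmat}, the off-diagonal entries are nonzero multiples of $G_{n-1}^\sigma$ and $G_{n-1}$ respectively, so both vanish precisely when $G_{n-1}=0$; and when $G_{n-1}=0$ the recursion (\ref{eqn:R2a}) with $k=n$ gives $G_n = -G_{n-2}$, hence the diagonal entries are $N(b/c)(-u^{\sigma^{-1}}G_{n-2}^\sigma)$ and $N(b/c)G_n$, which one checks are automatically equal when $G_{n-1}=0$ — actually the cleaner route is: $A_L$ scalar $\iff A_L = \Tr(A_L)/2\cdot I$ (q odd) or, for general $q$, $A_L$ scalar $\iff$ off-diagonals vanish and $\chi_L$ has a repeated root, i.e. $G_{n-1}=0$ and $\Delta_L = 0$ (q odd) or $G_{n-1}=0$ and $G_n\in\Fq$ (using that $\Tr(A_L)^2 = N(b/c)^2 G_n\cdot(\text{stuff})$ forces the diagonal condition in characteristic 2). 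This gives the first bullet in each parity. For the $Z_L=1$ bullet: repeated root of $\chi_L$ but not scalar, i.e. $\Delta_L=0$ (resp. $G_n\in\Fq$) together with $G_{n-1}\neq 0$. For $Z_L=2$ versus $Z_L=0$: $\chi_L$ splits over $\Fq$ with distinct roots iff $\Delta_L$ is a nonzero square (q odd), and iff $T_0(\det(A_L)/\Tr(A_L)^2)=0$ with $\Tr(A_L)\neq 0$ (q even, via the standard Artin–Schreier criterion for $X^2+X+\det/\Tr^2$ having roots in $\Fq$); I must also dispose of the degenerate subcase $\Tr(A_L)=0$ in characteristic 2, which forces $\det(A_L)\neq 0$ hence $\chi_L = X^2 + \det(A_L)$ has a repeated root, already covered by the $G_n\in\Fq$/$G_{n-1}$ dichotomy. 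The $q$-even conditions $G_n\in\Fq$ versus $G_n\notin\Fq$ should be shown equivalent to $\Tr(A_L)^2\in\Fq$-vs-not respectively, which is immediate since $\Tr(A_L) = N(b/c)\cdot(\text{something in }\Fq\text{ plus }G_n$-dependent part$)$; more precisely $G_n\notin\Fq$ is exactly the condition that $A_L$ is not scalar AND $\chi_L$ is either irreducible or has distinct roots, while the relation $G_n^{\sigma^2}-G_n = G_{n-1}^\sigma - G_{n-1}^{\sigma^2}$ noted in the text shows $G_{n-1}=0 \Rightarrow G_n^{\sigma^2}=G_n$, and since $\gcd(n,s)=1$ this means $G_n\in\Fq$ — giving the clean equivalence needed for the $q$-even bullets.

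Next I would handle the $b=0$ case separately, as flagged in Remark \ref{b0}: here $u$ is a formal device and one should work directly with $A_L = C\,C^\sigma\cdots C^{\sigma^{n-1}}$ where $C = \left(\begin{smallmatrix}0 & -a/c\\ 1 & 0\end{smallmatrix}\right)$. When $n$ is odd, $A_L$ is anti-diagonal with product of off-diagonal entries equal to $-N(-a/c)\in\Fq^*$, so $\chi_L(X) = X^2 + N(-a/c)$ — wait, one should just compute $\det(A_L) = N(-a/c)$ and $\Tr(A_L)=0$, hence $\chi_L(X)=X^2+N(-a/c)$ (char may be even or odd; if even this is $X^2 - N(-a/c)$, a square, giving one eigenvalue $\sqrt{N(-a/c)}\in\Fq$ always, which matches "$Z_L=1$ if $N(-a/c)$ is a nonzero square"). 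Actually for $q$ even every element is a square, so the criterion "$N(-a/c)$ a nonzero square" is automatic and $Z_L=1$; for $q$ odd, $X^2+N(-a/c)$ has roots in $\Fq$ iff $-N(-a/c)$ is a square, and since $N(-a/c)=(-1)^n N(a/c)$ with $n$ odd this is $N(a/c)$... I would be careful with signs here, but the upshot is exactly the stated dichotomy. When $n$ is even, $C^{\sigma^{n-1}}$ composed appropriately makes $A_L$ diagonal with entries that are norms over the subfield $\Fqt$, i.e. $A_L = \mathrm{diag}(N_{q^n:q^2}(-a/c), N_{q^n:q^2}(-a/c)^{\sigma})$ or similar, which is scalar iff that norm lies in $\Fq$, giving $Z_L = q+1$, and otherwise irreducible giving $Z_L=0$.

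The main obstacle, I expect, is the characteristic-2 analysis: keeping straight the equivalences among "$A_L$ scalar", "$\Tr(A_L)=0$", "$G_{n-1}=0$", "$G_n\in\Fq$", and the Artin–Schreier trace condition, and in particular showing that $G_n\notin\Fq$ correctly captures "$\chi_L$ has no repeated root in $\Fq$ and $A_L$ is not scalar" so that the $\Lambda_L = \det(A_L)/\Tr(A_L)^2$ criterion is well-posed (denominator nonzero) and correctly distinguishes $Z_L=2$ from $Z_L=0$. The relation $G_n^{\sigma^2}-G_n = G_{n-1}^\sigma-G_{n-1}^{\sigma^2}$ and the fact that $\gcd(n,s)=1$ (so the fixed field of $\sigma^2$ meeting $\Fqn$ relates cleanly to $\Fq$) are the tools that resolve this; I would isolate the statement "$G_{n-1}=0 \iff G_n\in\Fq$ and $A_L$ scalar, given $b\neq 0$" — hmm, that's not quite right either since $G_n\in\Fq$ alone is the stated condition — as a small lemma and verify it by a short computation with the recursions before assembling the four bullets. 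Everything else is routine linear algebra over a $2\times 2$ matrix with $\Fq$-rational characteristic polynomial.
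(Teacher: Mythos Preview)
Your plan is essentially the paper's own approach: translate $Z_L$ into the eigenspace structure of the $2\times 2$ matrix $A_L$ via Theorem~\ref{thm:PLroots2}, determine when $A_L$ is scalar from Proposition~\ref{prop:ALmat}, and read off the remaining cases from $\chi_L$ using Proposition~\ref{prop:charAL} together with the standard quadratic root-count over $\Fq$. The paper's proof is in fact much terser than your sketch: it proves only the $Z_L=q+1$ criterion in detail and then declares that ``the remaining cases follow immediately'' from Theorem~\ref{thm:PLroots2}, Proposition~\ref{prop:charAL}, and the well-known criteria for a quadratic; it does not treat the $b=0$ statements at all in the proof.

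There is one concrete point where you fumble and the paper is clean. When $G_{n-1}=0$ you invoke recursion~(\ref{eqn:R2a}) at $k=n$ and write ``$G_n=-G_{n-2}$'', which drops the factor $u^{\sigma^{n-2}}$; you then assert the two diagonal entries are ``automatically equal'', which is false. The paper instead combines \emph{both} recursions (\ref{eqn:R2a}) and (\ref{eqn:R2b}) at $k=n$ with $G_{n-1}=0$: together they give $G_n^{\sigma^2}=G_n$, and applying $\sigma$ to $G_n+u^{\sigma^{n-2}}G_{n-2}=0$ yields $-u^{\sigma^{-1}}G_{n-2}^\sigma=G_n^\sigma$. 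Hence
\[
A_L \;=\; N(b/c)\begin{pmatrix}G_n^\sigma&0\\0&G_n\end{pmatrix},
\]
which is scalar precisely when $G_n\in\Fq$. This single identity replaces your retreat to ``off-diagonals vanish plus $\chi_L$ has a repeated root'' and dissolves most of the characteristic-$2$ bookkeeping you flag as the main obstacle, since ``$G_n\in\Fq$'' is literally equality of the two diagonal entries. Your fallback argument (diagonal $+$ repeated root $\Rightarrow$ scalar) is logically valid for the $Z_L=q+1$ bullet, but the attempt to carry the same dictionary (``repeated root $\Leftrightarrow G_n\in\Fq$'') over to the $Z_L=1$ bullet when $G_{n-1}\ne 0$ does not go through as written, and the paper does not attempt that equivalence either --- it simply appeals to the quadratic criteria without further comment.
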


\begin{proof}
By Theorem \ref{thm:splits}, $P_L$ has $q+1$ roots if and only if $A_L$ is a scalar multiple of the identity. By Proposition \ref{prop:ALmat}, $A_L$ is a diagonal matrix if and only if $G_{n-1}= 0$. Suppose $G_{n-1}= 0$. 
Then \eqref{eqn:R2a} and \eqref{eqn:R2b} with $k=n$ are
\[
G_n+ u^{\sigma^{n-2}}G_{n-2} = 0\quad\quad \textrm{and} \quad\quad
G_n+uG_{n-2}^{\sigma^2} = 0.
\]
Thus we get that $G_n=G_n^{\sigma^2}$, and so $G_n\in \FF_{q^2}$. 
Then $-u^{\sigma^{n-1}}G_{n-2}^\sigma=G_n^\sigma$, and so
\[
A_L = N(b/c)\npmatrix{G_n^{\sigma}&0\\0&G_n}
\]
Thus $A_L$ is a scalar multiple of the identity if and only if $G_{n-1}=0$ and $G_n\in \Fq$, as claimed.



The remaining cases follow immediately from Theorem \ref{thm:PLroots2} and Proposition \ref{prop:charAL}, together with the well-known criteria for the number of distinct roots of a quadratic polynomial.
\end{proof}

\subsection{Criteria for number of roots of $L$}

We now present the analogous theorem for the linearized polynomial, which 
classifies all the possibilities for the number of roots.

\begin{theorem}\label{degree2alln}
Let $L(x) = ax+bx^\sigma+cx^{\sigma^2}\in \Fqn[x]$ with $ac\not=0$. Then 
\begin{itemize}
\item[(i)]
$L$ has $q^2$ roots in $\Fqn$ if and only if $G_{n-1}=0$ and $N(b/c)G_n = 1$;
\item[(ii)]
$L$ has $q$ roots in $\Fqn$ if and only if 
$$1 -  N(b/c)( G_n+G_n^{\sigma}+G_{n-1}^{\sigma}) + N(a/c)=0,$$ and the conditions of (i) are not satisfied.
\item[(iii)]
$L$ has $1$ root in $\Fqn$ if and only if 
$$1 -  N(b/c)( G_n+G_n^{\sigma}+G_{n-1}^{\sigma}) + N(a/c)\ne 0.$$
\end{itemize}
\end{theorem}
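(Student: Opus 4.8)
The plan is to read off everything from the characteristic polynomial of $A_L$ using Theorem~\ref{thm:PLroots2} and Theorem~\ref{thm:splits}, exactly as was done for $P_L$ in the previous subsection, but now tracking the eigenvalue $1$ rather than all eigenvalues in $\Fq$. Recall from Theorem~\ref{thm:PLroots2} that $L$ has $q^{n_1}$ roots in $\Fqn$, where $n_1=\dim\ker(A_L-I)$, so the three cases correspond to $n_1=2,1,0$ respectively. Since $A_L$ is $2\times 2$, we have $n_1=2$ iff $A_L=I$; $n_1=1$ iff $1$ is an eigenvalue of $A_L$ but $A_L\ne I$; and $n_1=0$ iff $1$ is not an eigenvalue, i.e. $\chi_L(1)\ne 0$.

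First I would handle (iii): $n_1=0$ iff $\chi_L(1)\ne 0$. By Proposition~\ref{prop:charAL}, $\chi_L(\mu x)=\mu^2\bigl(x^2-(G_n+G_n^\sigma+G_{n-1}^\sigma)x+N(u)\bigr)$ with $\mu=N(b/c)$; substituting $x=1/\mu$ (and using $\mu^2 N(u)=N(a/c)$ from the determinant computation preceding the proposition) gives $\chi_L(1)=1-N(b/c)(G_n+G_n^\sigma+G_{n-1}^\sigma)+N(a/c)$, which is precisely the quantity in (ii) and (iii). So $L$ has a nonzero number of roots fixed, i.e. $1$ is an eigenvalue, iff this expression vanishes; this immediately yields (iii), and shows that (i) and (ii) together cover exactly the case where it vanishes. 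Next I would handle (i): by Theorem~\ref{thm:splits}, $L$ has $q^2$ roots iff $A_L=I$. Using the explicit form of $A_L$ in Proposition~\ref{prop:ALmat}, the off-diagonal entries force $G_{n-1}=0$ (note $a/b$ and $(c/b)^{\sigma^{-1}}$ are nonzero, and one clears denominators per Remark~\ref{b0}), and then exactly as in the proof of Theorem~\ref{thm:PLcriteria2} one gets $G_n\in\FF_{q^2}$ with $-u^{\sigma^{n-1}}G_{n-2}^\sigma=G_n^\sigma$, so $A_L=N(b/c)\,\mathrm{diag}(G_n^\sigma,G_n)$; this equals $I$ iff $N(b/c)G_n=1$ (which also forces $G_n^\sigma=G_n$, i.e. $G_n\in\Fq$, consistently). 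That establishes (i). Finally (ii) is what remains: the displayed expression vanishes (so $1$ is an eigenvalue and $n_1\ge 1$) but the conditions of (i) fail (so $A_L\ne I$ and $n_1<2$), hence $n_1=1$ and $L$ has exactly $q$ roots.

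The only mild subtlety — and the step I would be most careful about — is making sure the case analysis is genuinely exhaustive and the ``and the conditions of (i) are not satisfied'' clause in (ii) is doing the right thing: one must check that whenever $A_L=I$ the displayed expression of (ii) automatically vanishes (it does, since $\chi_L(1)=\det(I-A_L)=0$ when $A_L=I$), so that (i) is indeed a sub-case of ``the expression vanishes'' and (ii) correctly captures the complementary part. With that observed, (i), (ii), (iii) partition all possibilities according to $n_1=2,1,0$, and since by Proposition~\ref{prop:PLroots1} (or rather the remark following it on $x^{\sigma^n}-x$) the nullity of $L$ can only be $0$, $1$, or $2$ for $\sigma$-degree $2$, the three cases are mutually exclusive and exhaustive. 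No new computation beyond what is already in Propositions~\ref{prop:ALmat} and~\ref{prop:charAL} is needed; this is essentially a transcription of the $P_L$ argument with ``eigenvalue in $\Fq$'' replaced by ``eigenvalue equal to $1$''.
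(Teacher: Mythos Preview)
Your proposal is correct and follows essentially the same route as the paper's proof: both invoke Theorem~\ref{thm:splits} for part~(i) via the argument of Theorem~\ref{thm:PLcriteria2}, and both use Theorem~\ref{thm:PLroots2} together with the evaluation of $\chi_L$ at $1$ (from Proposition~\ref{prop:charAL}) for parts~(ii) and~(iii). Your write-up is slightly more explicit about the computation of $\chi_L(1)$ and the mutual exclusivity of the cases, but there is no substantive difference in method.
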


\begin{proof}  
(i) By Theorem \ref{thm:splits}, $L$ has $q^2$ roots in $\Fqn$ if and only if $A_L=I$. The same argument as in Theorem \ref{thm:PLcriteria2} shows that this occurs if and only if $G_{n-1}=0$ and $N(b/c)G_n = 1$, as claimed.

(ii) By Theorem \ref{thm:PLroots2}
$L$ has $q$ roots in $\Fqn$ if and only if $1$ is an eigenvalue of $A_L$, and $A_L\ne I$.
Evaluating the characteristic polynomial at 1 using Proposition \ref{prop:charAL}
gives the stated expression.

(iii) By Theorem \ref{thm:PLroots2} $L$ has one root in $\Fqn$ if and only if 1 is not an eigenvalue of $A_L$.
\end{proof}
%

\subsection{Examples for small $n$}

The sequence $(G_k)$ begins as follows, starting at $G_0$.
\[
\begin{array}{|c|c|c|c|c|c|c|}
\hline
k&G_k\\

\hline
0&1\\
1&-1\\
2&1-u\\
3&u^\sigma+u-1\\
4&(1-u)^{\sigma^2+1}-u^\sigma\\
5&1-u^{\sigma^3+1}-(1-u)^{\sigma^2+1}-(1-u)^{\sigma^3+\sigma}\\
\hline
\end{array}
\]

We proceed recursively to compute the $G_k$ and $A_L$. The conditions for the number of roots of $P_L$ depend on $\det(A_L)$ and $\Tr(A_L)$. Note that we always have that $\det(A_L) = N(a/c)$. We compute now the expression for $\Tr(A_L)$ for some small values of $n$. We denote the trace function from $\FF_{q^i}$ to $\FF_{q}$ by $\tr_{q^i:q}$, and the trace function from $\Fqn$ to $\Fq$ by $\tr$.

$\mathbf{n=4}$: For $L\in \FF_{q^4}[x]$, we have 
\begin{align*}
A_L &=N(b/c)\npmatrix{-u^{\sigma^{3}}G_{2}^\sigma&-(a/b)G_{3}^\sigma\\
(c/b)^{\sigma^{3}}G_{3}&G_{4}}\\
&=  N(b/c) \npmatrix{u^{\sigma^3}(u^\sigma-1)&(a/b)(1-u^\sigma-u^{\sigma^2})\\ (c/b)^{\sigma^{3}}(u+u^\sigma-1)&(1-u)^{\sigma^2+1}-u^\sigma}
\end{align*}
Furthermore we have
\begin{align*}
\Tr(A_L)&=N(b/c)(1-\tr_{q^4:q}(u)+\tr_{q^2:q}(u^{1+\sigma^2}))\\
%
%
\Delta_L&= \Tr(A_L)^2 - 4\det(A_L)\\
&=N(b/c)^2\left((G_4+G_4^\sigma +G_{3}^\sigma)^2 - 4N(u)\right).
\end{align*}

$\mathbf{n=5}$:
For $L\in \FF_{q^5}[x]$, we have 
\begin{align*}
A_L &=N(b/c)\npmatrix{-u^{\sigma^{4}}G_{3}^\sigma&-(a/b)G_{4}^\sigma\\
(c/b)^{\sigma^{4}}G_{4}&G_{5}}\\
\Tr(A_L)&=N(b/c)(\tr(u-u^{1+\sigma^2})-1)
\end{align*}


$\mathbf{n=6}$: For $L\in \FF_{q^6}[x]$, we have 
\begin{align*}
\Tr(A_L)&=N(b/c)(1-\tr_{q^6:q}(u-u^{1+\sigma^2})+\tr_{q^3:q}(u^{1+\sigma^3}) -\tr_{q^2:q}(u^{1+\sigma^2+\sigma^4})).
\end{align*}

$\mathbf{n=7}$: For $L\in \FF_{q^7}[x]$, we have 
\begin{align*}
\Tr(A_L)&=N(b/c)(\tr(u-u^{\sigma^2+1}-u^{\sigma^3+1}+u^{\sigma^4+\sigma^2+1})-1)
\end{align*}

\section{Criteria for the number of roots for $\sigma$-degree $3$}\label{sectd3}


\subsection{Recursions for the matrix $C_k$}

Let $L = ax+bx^\sigma+cx^{\sigma^2}+dx^{\sigma^3}$ with $a,b,c,d\in \Fqn$, $ad\ne 0$.

Define
\[
C_L = \npmatrix{0&0&-a/d\\1&0&-b/d\\0&1&-c/d},
\]
and 
\[
C_k = C_LC_L^{\sigma}\cdots C_L^{\sigma^{k-1}}.
\]

Note that
\[
C_k = C_{k-1}C_L^{\sigma^{k-1}} = C_LC_{k-1}^{\sigma}.
\]

We define
\[
w = \frac{a^\sigma c}{b^{\sigma+1}}\quad\quad \textrm{and}\quad\quad
z = \frac{b^\sigma d}{c^{\sigma+1}}.
\]

We note the following identities:
\[
wz = \frac{a^\sigma d}{bc^\sigma}\quad\quad\quad\quad
w^\sigma z^{\sigma+1} = \frac{a^{\sigma^2}d^{\sigma+1}}{c^{\sigma^2+\sigma+1}}.
\]

%

Let $k\ge 3$ and let us define 
\[
X = \npmatrix{a/c&0&0\\0&b/c&0\\0&0&1}, \quad
Z_k=\npmatrix{(c/d)^{[k-2]}&0&0\\0&(c/d)^{[k-1]}&0\\0&0&(c/d)^{[k]}}
\]
and define $Y_k$ by  $C_k = XY_kZ_k$. Define also
\[
Y_2 = \npmatrix{0&-1&1\\0&-1&1-wz\\1&-1&1-z}
\]

\begin{lemma}
Let $k\ge 3$, then 
$Y_k$ satisfies the recursions
\[
Y_k = Y_{k-1}\npmatrix{0&0&-w^\sigma z^{\sigma+1}\\1&0&-z^\sigma\\0&1&-1}^{\sigma^{k-3}}
\]
\[
Y_k= \npmatrix{0&0&-1\\wz&0&-1\\0&z&-1}Y_{k-1}^\sigma
\]
\end{lemma}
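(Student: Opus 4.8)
The plan is to verify both recursions by direct matrix manipulation, following verbatim the strategy used for Lemma \ref{prop:Yrec} in the $\sigma$-degree two case: I would substitute the two natural factorisations of $C_k$ into the defining identity $Y_k = X^{-1}C_kZ_k^{-1}$ and read off the leftover factors. As in Remark \ref{b0}, the substitutions $w,z$ are to be understood formally, so that the hypothesis $ad\neq 0$ suffices; the computations below temporarily assume $b,c\neq 0$ so that $X$ and $Z_k$ are invertible, and the resulting polynomial identities in $w,z$ then hold in general.

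For the first recursion I would use $C_k = C_{k-1}C_L^{\sigma^{k-1}}$ and $C_{k-1}=XY_{k-1}Z_{k-1}$ to get
\[
Y_k = X^{-1}C_kZ_k^{-1} = X^{-1}\big(XY_{k-1}Z_{k-1}\big)C_L^{\sigma^{k-1}}Z_k^{-1} = Y_{k-1}\big(Z_{k-1}C_L^{\sigma^{k-1}}Z_k^{-1}\big),
\]
so that it remains only to show that $Z_{k-1}C_L^{\sigma^{k-1}}Z_k^{-1}$ --- obtained by rescaling the $(i,j)$-entry of the shifted companion matrix $C_L^{\sigma^{k-1}}$ by the power of $c/d$ contributed by $Z_{k-1}$ on the left and $Z_k^{-1}$ on the right --- equals $\npmatrix{0&0&-w^\sigma z^{\sigma+1}\\1&0&-z^\sigma\\0&1&-1}^{\sigma^{k-3}}$. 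This is a bounded computation: the identity $[m]-[m-1]=\sigma^{m-1}$ makes the two subdiagonal entries collapse to $1$ and the $(3,3)$-entry to $-1$, while the identities $wz=a^\sigma d/(bc^\sigma)$ and $w^\sigma z^{\sigma+1}=a^{\sigma^2}d^{\sigma+1}/c^{\sigma^2+\sigma+1}$ already recorded show that the last column is the $\sigma^{k-3}$-th power of $(-w^\sigma z^{\sigma+1},\,-z^\sigma,\,-1)^T$.

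For the second recursion I would use instead $C_k=C_LC_{k-1}^\sigma$ and $C_{k-1}^\sigma = X^\sigma Y_{k-1}^\sigma Z_{k-1}^\sigma$, which gives
\[
Y_k = X^{-1}C_kZ_k^{-1} = \big(X^{-1}C_LX^\sigma\big)\,Y_{k-1}^\sigma\,\big(Z_{k-1}^\sigma Z_k^{-1}\big).
\]
Two elementary checks then finish the proof. First, $Z_{k-1}^\sigma Z_k^{-1}=(d/c)I$, because $\sigma[m]=[m+1]-1$ makes each diagonal entry of $Z_{k-1}^\sigma$ equal to $(d/c)$ times the corresponding entry of $Z_k$. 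Second, $(d/c)X^{-1}C_LX^\sigma$ equals $\npmatrix{0&0&-1\\wz&0&-1\\0&z&-1}$: conjugating $C_L$ by the diagonal $X$ replaces its last column by $(-c/d,-c/d,-c/d)^T$ and rescales the two unit subdiagonal entries by $(c/b)(a/c)^\sigma$ and $(b/c)^\sigma$, and then multiplying through by $d/c$ turns the last column into $(-1,-1,-1)^T$, the $(2,1)$-entry into $(d/b)(a/c)^\sigma = a^\sigma d/(bc^\sigma)=wz$, and the $(3,2)$-entry into $b^\sigma d/c^{\sigma+1}=z$.

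No step here is conceptually difficult; the whole argument is bookkeeping, and the only thing that will require care is keeping the exponent arithmetic in the $y^{[\,\cdot\,]}$-notation and the towers of $\sigma$-powers consistent --- precisely the routine already carried out in the degree-two case, now slightly heavier because of the extra parameter $z$. For completeness I would also check, by the same sort of diagonal computation, that the stated $Y_2$ coincides with $X^{-1}C_2Z_2^{-1}$, which supplies the base case for iterating either recursion.
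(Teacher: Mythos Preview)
Your proposal is correct and is precisely the approach the paper intends: the paper's own proof consists of the single sentence ``The proof is analogous to Lemma~\ref{prop:Yrec},'' and your write-up carries out exactly that analogy, with the same two factorisations of $C_k$ and the same verification that $Z_{k-1}C_L^{\sigma^{k-1}}Z_k^{-1}$, $Z_{k-1}^\sigma Z_k^{-1}$, and $(d/c)X^{-1}C_LX^\sigma$ are the stated matrices. Your bookkeeping (including the exponent identities $[k]-[k-1]=\sigma^{k-1}$ and $\sigma[m]=[m+1]-1$) is accurate.
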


\begin{proof}
The proof is analogous to Lemma \ref{prop:Yrec}.
\end{proof}

%

Thus $Y_k$ is a matrix each of whose entries is a polynomial in the two variables $w,z$. We will write the third column of $Y_k$ as $(F_k,G_k,H_k)^T$. 
In other words, we define $H_k$ to be the $(3,3)$ entry of $Y_k$, with
 $H_0=1$, $H_1=-1$ and $H_2=1-z$.

It is clear from the first recursion that
\[
Y_k = \npmatrix{F_{k-2}&F_{k-1}&F_k\\G_{k-2}&G_{k-1}&G_k\\H_{k-2}&H_{k-1}&H_k},
\]
and that
\begin{equation}\label{firstrec}
H_k+H_{k-1}+z^{\sigma^{k-2}}H_{k-2}+w^{\sigma^{k-2}}z^{\sigma^{k-2}+\sigma^{k-3}} H_{k-3} = 0.
\end{equation}
Furthermore, the second recursion gives us that $F_k = -H_{k-1}^\sigma$, $G_k = wzF_{k-1}^\sigma-H_{k-1}^\sigma$, and $H_k = zG_{k-1}^\sigma - H_{k-1}^\sigma$. Together this gives
\begin{equation}\label{secondrec}
H_k+H_{k-1}^\sigma+zH_{k-2}^{\sigma^2}+ w^\sigma z^{\sigma+1} H_{k-3}^{\sigma^3} = 0.
\end{equation}
Therefore we can rewrite $Y_k$ in terms of the sequence $(H_k)$, as follows:
\[
Y_k = \npmatrix{-H_{k-3}^\sigma&-H_{k-2}^\sigma&-H_{k-1}^\sigma\\
-wzH_{k-4}^{\sigma^2}-H_{k-3}^\sigma&-wzH_{k-3}^{\sigma^2}-H_{k-2}^\sigma&-wzH_{k-2}^{\sigma^2}-H_{k-1}^\sigma\\
H_{k-2}&H_{k-1}&H_{k}}
\]
Note that when $k=n$,  taking  \eqref{firstrec} to the power of 
$\sigma^3$ and subtracting \eqref{secondrec}  gives
\begin{equation}\label{knrec}
H_n^{\sigma^3} + H_{n-1}^{\sigma^3} + z^\sigma H_{n-2}^{\sigma^3}=
H_n + H_{n-1}^{\sigma} + z H_{n-2}^{\sigma^2}.
\end{equation}

As $C_n = A_L$ by definition, we have $A_L = XY_nZ_n$, which shows the following.

\begin{theorem}
\label{prop:ALmat3}
Let $L = ax+bx^\sigma+cx^{\sigma^2}+dx^{\sigma^3}$ with $a,b,c,d\in \Fqn$, $ad\ne 0$, and let $H_k$ be as defined above. Then $A_L$ is determined by $H_n$, $H_{n-1}$, and $H_{n-2}$. Explicitly,
\[
A_L = N(c/d)\npmatrix{H_n^{\sigma^{-2}}+H_{n-1}^{\sigma^{-1}}+z^{\sigma^{-2}}H_{n-2}&-(a/b)z^{\sigma^{-1}}H_{n-2}^\sigma&-(a/c)H_{n-1}^\sigma\\
(d/c)^{\sigma^{-2}}(H_{n-1}^{\sigma^{-1}}+H_{n-2})&H_{n}^{\sigma^{-1}}+H_{n-1}&-(b/c)(wzH_{n-2}^{\sigma^2}+H_{n-1}^\sigma)\\
(d/c)^{\sigma^{-1}+\sigma^{-2}}H_{n-2}&(d/c)^{\sigma^{-1}}H_{n-1}&H_{n}}.
\]
Furthermore, the coefficients of the characteristic polynomial $\chi = x^3-\chi_2 x^2+\chi_1 x-\chi_0$ of $A_L$ are given by 
\begin{align*}
\chi_0 = \det(A_L) &= N(-a/d)=N(c/d)^3N(-w)N(z)^2,
\end{align*}
and
\begin{align*}
\chi_1  &= N(c/d)^2(a_{1}+a_{2}+a_{3})
\end{align*}
where
\begin{align*}
a_{1} &= w^\sigma z^{\sigma+1}(H_{n-1}^\sigma H_{n-2}^{\sigma^3}+H_{n-2}^{\sigma^2+\sigma^3}+z^\sigma H_{n-2}^{\sigma^3} H_{n-3}^{\sigma^3}+w^{\sigma^2}z^{\sigma+\sigma^2}H_{n-3}^{\sigma^3+\sigma^4})\\
a_{2} &= w^\sigma z^{\sigma+1}(H_{n}^{\sigma^2} H_{n-3}^{\sigma^3}+H_{n-1}^{\sigma^3} H_{n-2}^{\sigma^2})\\
a_{3} &= w^{\sigma^2} z^{\sigma+\sigma^2}(H_{n}^{\sigma^2} H_{n-3}^{\sigma^4}+H_{n-1}^{\sigma^2} H_{n-2}^{\sigma^4})+z^\sigma (H_{n}^{\sigma^2} H_{n-2}^{\sigma^3}+H_{n-1}^{\sigma^2+\sigma^3}),
\end{align*}
and
\begin{align*}
\chi_2 = \Tr(A_L) &= N(c/d)(H_n^{\sigma^{2}}+H_{n}^{\sigma}+H_{n}+H_{n-1}^{\sigma}+
H_{n-1}^{\sigma^{2}}+zH_{n-2}^{\sigma^{2}}).
\end{align*}
\end{theorem}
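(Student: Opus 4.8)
The plan is to prove Theorem \ref{prop:ALmat3} in two stages, following the template already established for $\sigma$-degree two in Proposition \ref{prop:ALmat}. First I would establish the explicit form of $A_L$ from the identity $A_L = C_n = XY_nZ_n$. The preceding lemma gives $Y_n$ written in terms of the single sequence $(H_k)$ via the $3\times 3$ matrix
\[
Y_n = \npmatrix{-H_{n-3}^\sigma&-H_{n-2}^\sigma&-H_{n-1}^\sigma\\
-wzH_{n-4}^{\sigma^2}-H_{n-3}^\sigma&-wzH_{n-3}^{\sigma^2}-H_{n-2}^\sigma&-wzH_{n-2}^{\sigma^2}-H_{n-1}^\sigma\\
H_{n-2}&H_{n-1}&H_{n}},
\]
so conjugating by $X = \mathrm{diag}(a/c, b/c, 1)$ and $Z_n = \mathrm{diag}((c/d)^{[n-2]},(c/d)^{[n-1]},(c/d)^{[n]})$ yields the claimed matrix after the substitutions are carried out. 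The only subtlety here is showing the $(1,1)$, $(2,2)$, $(1,2)$ and $(2,1)$ entries can be rewritten in the simplified forms stated: the raw expression has $H_{n-3}$ and $H_{n-4}$ appearing, and one must use the recursions \eqref{firstrec} and \eqref{secondrec} (with $k=n$, and with $k=n-1$) to collapse these down to expressions only in $H_n, H_{n-1}, H_{n-2}$. Concretely, \eqref{secondrec} at $k=n$ gives $H_{n-3}^{\sigma^3}$ in terms of $H_n, H_{n-1}^\sigma, H_{n-2}^{\sigma^2}$, and similarly for the other shifted indices; the normalisation factor $N(c/d) = (c/d)^{1+\sigma+\cdots+\sigma^{n-1}}$ emerges from multiplying through the $Z_n$ and $X$ entries, together with the telescoping of the exponents $[k]$.

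Second I would compute the characteristic polynomial coefficients $\chi_0, \chi_1, \chi_2$ directly from this explicit matrix. For $\chi_0 = \det(A_L)$ and $\chi_2 = \Tr(A_L)$ this is immediate: $\det(A_L) = N(\det(C_L)) = N(-a/d)$ by Remark \ref{detnorm}, and the alternative form $N(c/d)^3 N(-w) N(z)^2$ follows by expanding $-a/d = (c/d)^{\cdots}(-w)z^{\cdots}$ using the definitions of $w,z$ and taking norms; the trace is just the sum of the three diagonal entries read off above. The genuinely laborious part is $\chi_1$, the sum of the $2\times 2$ principal minors of $A_L$. I would compute each of the three principal minors separately, clear the $N(c/d)^2$ factor, and then repeatedly apply \eqref{firstrec}, \eqref{secondrec} and the derived relation \eqref{knrec} to reorganise the resulting polynomial in the $H_k$'s into the grouped form $a_1 + a_2 + a_3$. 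The grouping into $a_1, a_2, a_3$ is presumably chosen so that each $a_i$ has a common factor ($w^\sigma z^{\sigma+1}$ for $a_1, a_2$, and mixed factors for $a_3$), which suggests the intended route is to organise the minor expansion by which factor of $w$ and $z$ appears.

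I expect the main obstacle to be precisely this bookkeeping in $\chi_1$: keeping track of the $\sigma$-powers on each $H_k$ and on $w,z$ while substituting the recursions is error-prone, and one must be careful that Remark \ref{b0} applies — i.e. that although $w$ and $z$ formally have $b,c$ in denominators, every minor of $A_L$ clears these denominators, so the manipulations are legitimate even when $b=0$ or $c=0$. A secondary point worth stating carefully is that the expression for $A_L$ as written involves negative powers $\sigma^{-1}, \sigma^{-2}$ of $\sigma$; since $\sigma$ generates a finite cyclic group these are well-defined ($\sigma^{-1} = \sigma^{n-1}$), and one should check the index shifts in the recursions are consistent under this interpretation. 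Once the explicit matrix is in hand, everything else is a determinant computation, so I would present the matrix derivation in full and then simply state that $\chi_0, \chi_1, \chi_2$ follow by direct expansion together with \eqref{firstrec}--\eqref{knrec}, relegating the detailed minor computation to a remark or omitting it as routine.
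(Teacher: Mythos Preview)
Your plan matches the paper's approach almost exactly: verify $A_L = XY_nZ_n$ using the expression for $Y_n$ in the $H_k$, collapse the $H_{n-3},H_{n-4}$ entries via the recursions, read off $\chi_0$ from Remark \ref{detnorm}, and obtain $\chi_1,\chi_2$ by direct expansion. The one ingredient you underplay is the use of Theorem \ref{lem:charq}: the paper does not merely ``read off'' the trace from the diagonal, because the diagonal sum involves $\sigma^{-1},\sigma^{-2}$ whereas the stated $\chi_2$ involves $\sigma,\sigma^2$. The paper's move is that since $\chi_2\in\Fq$, one may replace $\chi_2$ by $\chi_2^{\sigma^2}$, which converts every exponent to a nonnegative power of $\sigma$; the same $\Fq$-rationality trick is used to massage $\chi_1$ into the displayed form. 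You should make this step explicit rather than relying only on the recursions.
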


\begin{proof}
It can be easily verified using the matrix $Y_k$ that $A_L=XY_nZ_n$ has the given form. The proof of the coefficients $\chi_1$ and $\chi_2$ then follows by direct calculation, taking into account the two recursions and the fact that each coefficient is in $\Fq$ by Theorem \ref{lem:charq}.
For example, 
we have taken the trace of the matrix to the power of ${\sigma^{2}}$. 

The constant term $\chi_0$ is equal to the determinant of $A_L$, which is equal to the norm of the determinant of $C_L$ (see Remark \ref{detnorm})
and the determinant of $C_L$ is $-a/d$.
\end{proof}


\subsection{Criteria for number of roots of $P_L$}

\begin{theorem}\label{d3split}
If $L= ax+bx^\sigma+cx^{\sigma^2}+dx^{\sigma^3}$ with $ad\ne 0$, then 
the number of roots of $P_L=a+bx+cx^{\sigma +1}+dx^{\sigma^2+\sigma +1}$ in 
$\Fqn$ is given by the
eigenvalues and eigenvectors of $A_L$, as follows.

Let $m_a$ denote the algebraic multiplicity and let $m_g$ denote the 
geometric multiplicity of an eigenvalue of $A_L$.
\begin{itemize}
\item $P_L$ has $q^2+q+1$ roots in $\Fqn$ if and only if $A_L$ has one
eigenvalue in $\Fq$ with multiplicity pair $(m_a,m_g)=(3,3)$.
\item $P_L$ has $q+1$ roots in $\Fqn$  if and only if $A_L$ has one
eigenvalue in $\Fq$ with multiplicity pair $(m_a,m_g)=(3,2)$.
\item $P_L$ has $1$ root in $\Fqn$  if and only if $A_L$ has one
eigenvalue in $\Fq$ with multiplicity pair $(m_a,m_g)=(3,1)$ or $(1,1)$.
\item $P_L$ has $q+2$ roots in $\Fqn$  if and only if $A_L$ has one
eigenvalue in $\Fq$ with multiplicity pair $(m_a,m_g)=(1,1)$
and one eigenvalue in $\Fq$ with multiplicity pair $(m_a,m_g)=(2,2)$
\item $P_L$ has $2$ roots in $\Fqn$  if and only if $A_L$ has one
eigenvalue in $\Fq$ with multiplicity pair $(m_a,m_g)=(1,1)$
and one eigenvalue in $\Fq$ with multiplicity pair $(m_a,m_g)=(2,1)$
\item $P_L$ has $3$ roots in $\Fqn$  if and only if $A_L$ has three
distinct
eigenvalues in $\Fq$ each with multiplicity pair $(m_a,m_g)=(1,1)$
\item $P_L$ has $0$ roots in $\Fqn$  if and only if $A_L$ has 
no eigenvalues in $\Fq$.
\end{itemize}
There are no other possibilities.
\end{theorem}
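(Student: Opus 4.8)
The plan is to deduce the statement from Theorem~\ref{thm:PLroots2} by a finite case analysis on the rational canonical form of the $3\times 3$ matrix $A_L$ over $\Fq$. By Theorem~\ref{lem:charq} the characteristic polynomial $\chi_L$ lies in $\Fq[x]$; it is monic of degree $\deg_\sigma(L)=3$, and since $\det(A_L)=N(-a/d)\neq 0$ (as $ad\neq 0$) the scalar $0$ is never an eigenvalue of $A_L$. By Theorem~\ref{thm:PLroots2} the number of roots of $P_L$ in $\Fqn$ equals $\sum_{\lambda\in\Fq}\frac{q^{n_\lambda}-1}{q-1}$, where $n_\lambda=\dim\ker(A_L-\lambda I)$ is the geometric multiplicity of $\lambda$; only eigenvalues lying in $\Fq$ contribute, and an irreducible factor of $\chi_L$ of degree at least $2$ spans an $A_L$-invariant subspace with no $\Fq$-rational eigenvector inside it. So it suffices to run through the possible factorization types of $\chi_L$ in $\Fq[x]$, record the admissible geometric multiplicities, and evaluate the sum in each case.

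First I would separate the five factorization types of a monic cubic over $\Fq$: (a)~$(x-\lambda)^3$; (b)~$(x-\lambda)^2(x-\mu)$ with $\lambda\neq\mu$; (c)~$(x-\lambda)(x-\mu)(x-\nu)$ with $\lambda,\mu,\nu$ pairwise distinct; (d)~$(x-\lambda)\,p_2(x)$ with $p_2$ irreducible of degree $2$; and (e)~$p_3(x)$ irreducible of degree $3$. In case (a) the single eigenvalue $\lambda\in\Fq$ has algebraic multiplicity $m_a=3$ and geometric multiplicity $m_g\in\{1,2,3\}$, contributing $\frac{q^{m_g}-1}{q-1}\in\{1,\,q+1,\,q^2+q+1\}$ roots. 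In case (b) the eigenvalue $\lambda$ has $m_a=2$ and $m_g\in\{1,2\}$ while $\mu$ has $(m_a,m_g)=(1,1)$, so $P_L$ has $\frac{q^{m_g}-1}{q-1}+1$ roots, namely $q+2$ if $m_g=2$ and $2$ if $m_g=1$. In case (c) each of the three eigenvalues contributes $1$, for a total of $3$. In case (d) the linear factor gives one eigenvalue with $(m_a,m_g)=(1,1)$ and $p_2$ contributes nothing, so $P_L$ has $1$ root. In case (e) there is no $\Fq$-rational eigenvalue and $P_L$ has $0$ roots. Matching these outcomes against the displayed list produces exactly the seven stated equivalences, and since the five types are jointly exhaustive this also proves that there are no further possibilities.

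Finally, since the five types are mutually exclusive, the case computations yield the ``if'' direction of each bullet and, by exclusivity, each root count in turn forces the corresponding structure, so the equivalences hold in both directions. I do not expect a genuine obstacle: once Theorem~\ref{thm:PLroots2} is available, the argument is bookkeeping over rational canonical forms. The two points needing care are (i) that the contribution of an eigenvalue of geometric multiplicity $n_\lambda$ is $\frac{q^{n_\lambda}-1}{q-1}$ and not $q^{n_\lambda}$, with eigenvalues outside $\Fq$ silently dropped; and (ii) that for $q=2$ the cases (b) and (c) are vacuous, since they would require two, respectively three, distinct \emph{nonzero} elements of $\Fq$, so the numerically coinciding pairs $(q+1,3)$ and $(q+2,2)$ create no ambiguity.
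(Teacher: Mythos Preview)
Your proof is correct and follows essentially the same approach as the paper: invoke Theorem~\ref{thm:PLroots2}, use Theorem~\ref{lem:charq} to place $\chi_L$ in $\Fq[x]$, and then run through the possible eigenvalue configurations of a $3\times 3$ matrix. Your write-up is simply more explicit in enumerating the five factorization types and in noting that $\det(A_L)=N(-a/d)\neq 0$ excludes the eigenvalue $0$, and your remark on the vacuity of cases (b) and (c) when $q=2$ is a nice addition not present in the paper.
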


\begin{proof}
The follows from Theorem  \ref{thm:PLroots2} and
consideration of all possible eigenvalue-eigenvector behaviours.
Only eigenvalues in $\Fq$ contribute, and the characteristic polynomial
must have $0, 1$ or 3 roots in $\Fq$ because it has coefficients in $\Fq$ by Theorem \ref{lem:charq}.
\end{proof}

Exact criteria for each case can be found in a similar way to the 
$\sigma$-degree 2 case.
As an example, we give the exact criteria for the first case,
when $P_L$ has the maximum number of roots in $\Fqn$.

\begin{theorem}\label{d3split}
If $L= ax+bx^\sigma+cx^{\sigma^2}+dx^{\sigma^3}$ with $abcd\ne 0$, then 
$P_L=a+bx+cx^{\sigma +1}+dx^{\sigma^2+\sigma +1}$ has $q^2+q+1$ roots if and only if $H_{n-1}=H_{n-2}=0$ and $H_n\in \Fq$. 
\end{theorem}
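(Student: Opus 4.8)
The plan is to invoke Theorem~\ref{thm:splits} in the form tailored to $\sigma$-degree $3$: $P_L$ has $q^2+q+1 = \frac{q^3-1}{q-1}$ roots in $\Fqn$ if and only if $A_L = \lambda I$ for some $\lambda \in \Fq$. So the entire task reduces to translating the matrix equation $A_L = \lambda I$ into conditions on the sequence $(H_k)$, using the explicit form of $A_L$ given in Theorem~\ref{prop:ALmat3}.

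First I would show that $A_L$ is diagonal if and only if $H_{n-1} = H_{n-2} = 0$. Looking at the explicit matrix in Theorem~\ref{prop:ALmat3}, the $(1,3)$ and $(2,3)$ entries are (up to nonzero scalars $N(c/d)$, $a/c$, $b/c$) equal to $-H_{n-1}^\sigma$ and $-(wzH_{n-2}^{\sigma^2}+H_{n-1}^\sigma)$, while the $(3,1)$ and $(3,2)$ entries are proportional to $H_{n-2}$ and $H_{n-1}$. Vanishing of the superdiagonal-corner entries forces $H_{n-1} = 0$ and then $H_{n-2} = 0$; conversely, if $H_{n-1} = H_{n-2} = 0$ then the $(1,3),(2,3),(3,1),(3,2)$ entries all vanish immediately, and I must also check the remaining off-diagonal entries, namely $(1,2)$ (proportional to $z^{\sigma^{-1}}H_{n-2}^\sigma$, which is $0$) and $(2,1)$ (proportional to $H_{n-1}^{\sigma^{-1}}+H_{n-2} = 0$). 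So $A_L$ diagonal $\iff H_{n-1}=H_{n-2}=0$. This step is essentially bookkeeping on the matrix entries and poses no real difficulty, though one must handle the formal use of $w,z$ as in Remark~\ref{b0}.

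Next, assuming $H_{n-1} = H_{n-2} = 0$, I would compute the three diagonal entries and show they are forced to be equal precisely when $H_n \in \Fq$. With $H_{n-1}=H_{n-2}=0$, the three recursions \eqref{firstrec}, \eqref{secondrec}, \eqref{knrec} at $k=n$ collapse dramatically: \eqref{firstrec} gives $H_n + z^{\sigma^{n-2}}H_{n-3}^{\,\text{(shifted)}} + \cdots = 0$ type relations expressing $H_n$ in terms of $H_{n-3}$ and $H_{n-4}$, and crucially \eqref{knrec} reduces to $H_n^{\sigma^3} = H_n$, i.e. $H_n \in \FF_{q^3}$. The three diagonal entries of $A_L/N(c/d)$ become $H_n^{\sigma^{-2}}+z^{\sigma^{-2}}H_{n-2}$ (the $H_{n-1}$ term drops), $H_n^{\sigma^{-1}}$, and $H_n$; after using $H_{n-2}=0$ the first is just $H_n^{\sigma^{-2}}$. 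So $A_L = N(c/d)\,\mathrm{diag}(H_n^{\sigma^{-2}}, H_n^{\sigma^{-1}}, H_n)$, a scalar matrix if and only if $H_n = H_n^{\sigma^{-1}} = H_n^{\sigma^{-2}}$, i.e. $H_n \in \Fq$. Finally one notes that when $H_n \in \Fq$ the scalar $\lambda = N(c/d)H_n$ automatically lies in $\Fq$, so the hypothesis of Theorem~\ref{thm:splits} is met.

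The main obstacle I anticipate is the careful simplification of the diagonal entries in the degenerate case: one needs to confirm that all the auxiliary terms involving $H_{n-3}, H_{n-4}, w, z$ either cancel or are subsumed, and that the relation $H_n \in \FF_{q^3}$ coming from \eqref{knrec} is correctly combined with the defining matrix form so that ``all three diagonal entries equal'' is genuinely equivalent to ``$H_n \in \Fq$'' and not to something weaker. This requires using $\det(A_L) = N(-a/d) \in \Fq$ and $\Tr(A_L) \in \Fq$ (Theorem~\ref{lem:charq}) as consistency checks, but the algebra is routine once the three recursions at $k=n$ are written out under the vanishing hypotheses.
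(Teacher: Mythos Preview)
Your proposal is correct and follows essentially the same route as the paper: invoke Theorem~\ref{thm:splits}, use the explicit form of $A_L$ from Theorem~\ref{prop:ALmat3} to see that $A_L$ is diagonal iff $H_{n-1}=H_{n-2}=0$, then read off the diagonal as $N(c/d)\,\mathrm{diag}(H_n^{\sigma^{-2}},H_n^{\sigma^{-1}},H_n)$ and conclude it is scalar iff $H_n\in\Fq$. The paper differs only cosmetically: it verifies the $(2,1)$ entry vanishes via the recursion \eqref{secondrec} at $k=n-1$ applied to the $Y_n$ form (whereas you cite the already-simplified entry $H_{n-1}^{\sigma^{-1}}+H_{n-2}$ from Theorem~\ref{prop:ALmat3}), and it rewrites the diagonal as $(H_n^{\sigma},H_n^{\sigma^2},H_n)$ after first using $H_n\in\FF_{q^3}$ from \eqref{knrec}, which is equivalent to your $(H_n^{\sigma^{-2}},H_n^{\sigma^{-1}},H_n)$.
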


\begin{proof}
By Theorem \ref{thm:splits}, $P_L$ has $q^2+q+1$ roots if and only if $A_L$ is a scalar multiple of the identity.

If  $H_{n-1}=H_{n-2} = 0$ then
using the second recursion \eqref{secondrec}, 
this automatically  implies that $-wzH_{n-4}^{\sigma^2}-H_{n-3}^\sigma=0$. 
 Hence, by  the form of the matrix in the statement of Theorem \ref{prop:ALmat3},
$A_L$ is a diagonal matrix if and only if $H_{n-1}=H_{n-2} = 0$.

Suppose now $H_{n-1}=H_{n-2} = 0$. Then
\eqref{knrec} implies that $H_n = H_n^{\sigma^3}$, so $H_n\in \FF_{q^3}$,
and by Theorem \ref{prop:ALmat3}, we get
\begin{align*}
A_L&= (c/d)^{[n]}\npmatrix{H_n^\sigma&0&0\\
0&H_n^{\sigma^2}&0\\
0&0&H_n}.
\end{align*}
Hence $A_L$ is a scalar multiple of the identity if and only if $H_n\in \Fq$.
\end{proof}

\begin{remark}
Theorem \ref{d3split} is valid only for the case where all coefficients are nonzero, due to the fact that the expressions $H_{k}$ are functions of $w$ and $z$, which are not defined for $bc=0$. However the matrix in Theorem \ref{prop:ALmat3} is valid, so long as we clear denominators before evaluating. For example, for $A_L$ to be a scalar multiple of the identity, we require $N(c/d)(a/c)H_{n-1}^\sigma = 0$. For $n=4$ this becomes $N(c/d)(a/c)(z^\sigma +z-1 -w^\sigma z^{1+\sigma})^\sigma=0$, and multiplying out we get no denominators involving $b$ or $c$.

We state full criteria only for the case $abcd\ne 0$ for simplicity. All other cases can be similarly analysed by a more careful consideration of Theorem \ref{prop:ALmat3}.
\end{remark}

\subsection{Criteria for number of roots of $L$}

\begin{theorem}\label{d3split2}
For $g\in \{0,1,2,3\}$,
if $L= ax+bx^\sigma+cx^{\sigma^2}+dx^{\sigma^3}$ with $d\ne 0$, then 
$L$ has $q^g$ roots in $\Fqn$ if and only if $1$ is an eigenvalue of $A_L$
with geometric multiplicity $g$.
\end{theorem}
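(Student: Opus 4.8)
The plan is to reduce this statement directly to the results already established, namely Theorem \ref{thm:nullAL} and Theorem \ref{thm:PLroots2}. By Theorem \ref{thm:PLroots2}, the number of roots of $L$ in $\Fqn$ equals $q^{n_1}$, where $n_1$ is the dimension of the eigenspace of $A_L$ associated with the eigenvalue $1$; equivalently, $n_1 = \nul(A_L - I)$, which by Theorem \ref{thm:nullAL} equals $\nul(L)$. Since $L$ has $\sigma$-degree at most $3$ (the coefficient $a$ is assumed nonzero throughout the paper, and $d\ne 0$ here so the $\sigma$-degree is exactly $3$), the matrix $A_L$ is $3\times 3$, so $n_1\in\{0,1,2,3\}$, and the number of roots of $L$ is $q^{n_1}\in\{1,q,q^2,q^3\}$. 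Writing $g$ for the geometric multiplicity of $1$ as an eigenvalue of $A_L$ — with the convention that $g=0$ when $1$ is not an eigenvalue — we have $g = n_1$, and therefore $L$ has exactly $q^g$ roots in $\Fqn$. This establishes both directions of the ``if and only if'' simultaneously, since the number of roots is a function of $g$ that takes distinct values $1, q, q^2, q^3$ on the four possible values of $g$.

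The only point requiring a word of care is the hypothesis: the theorem assumes only $d\ne 0$, not $abcd\ne 0$, so I cannot invoke the explicit formulas for $A_L$ in terms of the functions $H_k$ (which require $bc\ne 0$). But this is not needed: the argument above uses only the abstract characterization of $A_L$ via the companion matrix $C_L = \npmatrix{0&0&-a/d\\1&0&-b/d\\0&1&-c/d}$ and the definition $A_L = C_L C_L^\sigma\cdots C_L^{\sigma^{n-1}}$, both of which make sense as soon as $ad\ne 0$, and the appeal to Theorems \ref{thm:nullAL} and \ref{thm:PLroots2}, which are stated for arbitrary $L\in\L$ with $a_0\ne 0$.

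There is no real obstacle here; the statement is essentially a specialization of Theorem \ref{thm:PLroots2} to the case $\deg_\sigma(L)=3$, repackaged in terms of geometric multiplicity. The one thing to state cleanly is the dictionary $q^g = q^{n_1} = q^{\nul(A_L - I)} = q^{\nul(L)} = \#\{x\in\Fqn : L(x)=0\}$, and to note that the four values $g=0,1,2,3$ give the four mutually exclusive and exhaustive root counts $1, q, q^2, q^3$, so that specifying the root count and specifying $g$ carry the same information.
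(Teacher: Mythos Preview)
Your proposal is correct and takes essentially the same approach as the paper: both simply invoke Theorem~\ref{thm:PLroots2} (the paper's proof is a single sentence to that effect). Your additional elaboration on the dictionary $g = n_1 = \nul(A_L - I) = \nul(L)$ and your remark that only $ad\ne 0$ (not $abcd\ne 0$) is needed are accurate and more careful than the paper's own one-line justification.
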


\begin{proof}
The follows from Theorem  \ref{thm:PLroots2}.
\end{proof}

One can calculate exact criteria for each case
$g=0,1,2,3$ from the matrix $A_L$.
As an example, we state the $g=3$ and $g=0$ cases here. 

\begin{theorem}\label{d3split3}
If $L= ax+bx^\sigma+cx^{\sigma^2}+dx^{\sigma^3}$ with $abcd\ne 0$, then 
$L$ has $q^3$ roots if and only if $H_{n-1}=H_{n-2}=0$ and $H_nN(c/d)=1$.
\end{theorem}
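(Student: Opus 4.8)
The plan is to mirror the proof of Theorem~\ref{d3split} exactly, but now track the eigenvalue $1$ of $A_L$ rather than the scalar-multiple-of-the-identity condition. By Theorem~\ref{thm:splits}, $L$ has $q^3$ roots in $\Fqn$ if and only if $A_L = I$. So the entire task reduces to showing that $A_L = I$ is equivalent to the stated conditions $H_{n-1}=H_{n-2}=0$ and $H_n N(c/d)=1$.

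First I would invoke the computation already carried out inside the proof of Theorem~\ref{d3split}: assuming $H_{n-1}=H_{n-2}=0$, the second recursion \eqref{secondrec} forces $-wzH_{n-4}^{\sigma^2}-H_{n-3}^\sigma = 0$, so by the explicit form of $A_L$ in Theorem~\ref{prop:ALmat3} the matrix $A_L$ is diagonal precisely when $H_{n-1}=H_{n-2}=0$. Under that hypothesis, \eqref{knrec} gives $H_n = H_n^{\sigma^3}$, hence $H_n\in\FF_{q^3}$, and Theorem~\ref{prop:ALmat3} simplifies to
\[
A_L = (c/d)^{[n]}\npmatrix{H_n^\sigma&0&0\\0&H_n^{\sigma^2}&0\\0&0&H_n}.
\]
Now $A_L = I$ forces the three diagonal entries to all equal $1$; since $(c/d)^{[n]} = N(c/d)\in\Fq$ and $H_n\in\FF_{q^3}$, the equations $N(c/d)H_n^\sigma = N(c/d)H_n^{\sigma^2} = N(c/d)H_n = 1$ are equivalent (apply $\sigma$) to the single condition $N(c/d)H_n = 1$ together with $H_n\in\Fq$; but $N(c/d)H_n=1$ with $N(c/d)\in\Fq^*$ already forces $H_n = N(c/d)^{-1}\in\Fq$, so the $\FF_{q^3}$-versus-$\Fq$ subtlety collapses automatically. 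Thus $A_L=I \iff H_{n-1}=H_{n-2}=0$ and $H_nN(c/d)=1$.

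For the converse direction — that $A_L=I$ implies $H_{n-1}=H_{n-2}=0$ — I would argue that $A_L=I$ is in particular a scalar multiple of the identity, hence diagonal, so by the argument above (the off-diagonal entries of $A_L$ in Theorem~\ref{prop:ALmat3} vanish iff $H_{n-1}=H_{n-2}=0$, after clearing the harmless $b$-denominators as noted in the Remark following Theorem~\ref{d3split}) we get $H_{n-1}=H_{n-2}=0$, and then the diagonal form above gives $H_nN(c/d)=1$. The main (and really only) obstacle is bookkeeping: making sure the off-diagonal entry $-wzH_{n-2}^{\sigma^2}-H_{n-1}^\sigma$ in the $(2,3)$ slot of $A_L$ genuinely vanishes when $H_{n-1}=H_{n-2}=0$ — this is where one uses \eqref{secondrec} with index $n$ to rewrite $-wzH_{n-2}^{\sigma^2}-H_{n-1}^\sigma$ as $-H_n + (\text{terms in }H_{n-1},H_{n-2})$, but in fact \eqref{secondrec} reads $H_n + H_{n-1}^\sigma + zH_{n-2}^{\sigma^2} + w^\sigma z^{\sigma+1}H_{n-3}^{\sigma^3}=0$, so I should instead use the relation derived in Theorem~\ref{d3split}'s proof directly, namely that $H_{n-1}=H_{n-2}=0$ propagates via \eqref{secondrec} to kill exactly the entries needed. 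Everything else is a direct transcription of the degree-$2$ analogue Theorem~\ref{degree2alln}(i) and the already-proven Theorem~\ref{d3split}.
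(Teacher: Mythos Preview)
Your proposal is correct and follows essentially the same route as the paper: invoke Theorem~\ref{thm:splits} to reduce to $A_L=I$, cite the proof of Theorem~\ref{d3split} to get that $A_L$ is diagonal iff $H_{n-1}=H_{n-2}=0$ with the resulting diagonal form, and then read off $H_nN(c/d)=1$; your observation that $H_nN(c/d)=1$ with $N(c/d)\in\Fq^*$ already forces $H_n\in\Fq$ is exactly why the paper can drop the explicit $H_n\in\Fq$ clause. The one unnecessary detour is your worry about the $(2,3)$ entry $-(b/c)(wzH_{n-2}^{\sigma^2}+H_{n-1}^\sigma)$: this vanishes trivially once $H_{n-1}=H_{n-2}=0$, no recursion needed.
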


\begin{proof}
By the proof of Theorem \ref{d3split}
$A_L$ is a scalar multiple of the identity if and only if 
$H_{n-1}=H_{n-2}=0$ and
$H_n\in \Fq$, in which case $A_L$ is equal to the identity if and only if $H_nN(c/d)=1$. 
\end{proof}

\begin{theorem}\label{d3split3}
If $L= ax+bx^\sigma+cx^{\sigma^2}+dx^{\sigma^3}$ with $d\ne 0$, then 
$L$ is a permutation polynomial if and only if 
$1-\chi_2+\chi_1-\chi_0\ne 0$, where $\chi_i$ are given in Theorem \ref{prop:ALmat3}.
\end{theorem}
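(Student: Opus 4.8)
The plan is to read this off directly from the first main result, Theorem~\ref{thm:nullAL}, together with the elementary observation that a $\sigma$-linearized polynomial is a permutation polynomial of $\Fqn$ exactly when it has trivial kernel as an $\Fq$-linear map, i.e. $\nul(L)=0$ (equivalently, $L$ has exactly one root, namely $0$, in $\Fqn$). In other words, the statement is just the $g=0$ instance of Theorem~\ref{d3split2}, spelled out in terms of the characteristic polynomial of $A_L$.

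Concretely, first I would invoke Theorem~\ref{thm:nullAL} with $\alpha=1$ (so $\lambda=N(1)=1$) to obtain $\nul(L)=\nul(A_L-I)$. Hence $L$ is a permutation polynomial if and only if $A_L-I$ is nonsingular, if and only if $1$ is not an eigenvalue of $A_L$, if and only if $\chi_L(1)\ne 0$, where $\chi_L$ denotes the characteristic polynomial of $A_L$. Since we assume $d\ne 0$ we have $\deg_\sigma(L)=3$, so $A_L=C_LC_L^\sigma\cdots C_L^{\sigma^{n-1}}$ is a $3\times 3$ matrix and, by Theorem~\ref{prop:ALmat3}, $\chi_L(x)=x^3-\chi_2 x^2+\chi_1 x-\chi_0$ with $\chi_0,\chi_1,\chi_2$ as given there. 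Evaluating at $x=1$ yields $\chi_L(1)=1-\chi_2+\chi_1-\chi_0$, and the theorem follows.

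There is essentially no obstacle here beyond bookkeeping: the substantive work has already been carried out in Theorems~\ref{thm:nullAL} and~\ref{prop:ALmat3}. The only points worth a sentence are (i) checking that the standing hypothesis $a_0=a\ne 0$ is in force, so that together with $d\ne 0$ we have $ad\ne 0$ and the formulas of Theorem~\ref{prop:ALmat3} apply; and (ii) noting that $\chi_0=\det(A_L)=N(-a/d)\ne 0$, so $0$ is never an eigenvalue of $A_L$ — consistent with $A_L$ being a product of invertible companion matrices. If one wishes to allow $b=0$ or $c=0$, the same argument goes through using the explicit matrix form of $A_L$ in Theorem~\ref{prop:ALmat3} after clearing denominators, exactly as in the remark following Theorem~\ref{d3split}.
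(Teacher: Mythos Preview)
Your proposal is correct and follows essentially the same argument as the paper: $L$ is a permutation polynomial if and only if $1$ is not an eigenvalue of $A_L$, which is equivalent to $\chi_L(1)=1-\chi_2+\chi_1-\chi_0\ne 0$. Your write-up is more explicit (citing Theorem~\ref{thm:nullAL} directly and adding the bookkeeping remarks about $a\ne 0$ and $\chi_0\ne 0$), but the route is the same.
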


\begin{proof}
$L$ is a permutation polynomial if and only if $0$ is its only root. This occurs if and only if $1$ is not an eigenvalue of $A_L$. The given expression is just the evaluation of the characteristic polynomial of $A_L$ at $1$, and so the result follows.
\end{proof}

%
%

\subsection{Example for $n=5$}
Using the definitions and recursions, we see that the sequence $(H_k)$ begins as follows.

\[
\begin{array}{|c|c|c|c|c|c|c|}
\hline
k&H_k\\
\hline
0&1\\
1&-1\\
2&1-z\\
3&z^\sigma +z-1 -w^\sigma z^{1+\sigma}\\
4&1 - z-z^{\sigma}  -z^{\sigma^2} +w^{\sigma^2} z^{\sigma+\sigma^2} +z^{1+\sigma^2} +w^\sigma z^{1+\sigma}\\
\hline
\end{array}
\]
Thus for a linearized or projective polynomial of $\sigma$-degree $3$ to have maximum number of roots in $\FF_{q^5}$, we require that $H_3=H_4=0$. Thus after some simple algebra we have that this occurs precisely when $z^{\sigma^2+1}+z^\sigma-1=0$ and $z^\sigma +z -1-w^\sigma z^{1+\sigma}=0$. When both of these are satisfied, we have $H_5 = (1-z-z^\sigma)(1-z)^{\sigma^3}$, and further manipulation gives that $H_5= -N(z)$. This immediately gives the following.

\begin{theorem}
Suppose $L= ax+bx^\sigma+cx^{\sigma^2}+dx^{\sigma^3}$ has coefficients in $\FF_{q^5}$, and $abcd\ne 0$. 
Then $P_L=a+bx+cx^{\sigma +1}+dx^{\sigma^2+\sigma +1}$ has $q^2+q+1$ roots
in $\FF_{q^5}$ if and only if $z^{\sigma^2+1}+z^\sigma-1=0$ and $z^\sigma +z -1-w^\sigma z^{1+\sigma}=0$.

Also, $L$ has $q^3$ roots in $\FF_{q^5}$ if and only if $z^{\sigma^2+1}+z^\sigma-1=0$, $z^\sigma +z -1-w^\sigma z^{1+\sigma}=0$, and $N(z)N(c/d)= -1$.
\end{theorem}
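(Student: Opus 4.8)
The plan is to specialize the general $\sigma$-degree-three results to $n=5$ and then carry out the algebra using the explicit sequence $(H_k)$ tabulated above. For $n=5$ we have $H_{n-2}=H_3$, $H_{n-1}=H_4$, $H_n=H_5$. By Theorem \ref{thm:splits} together with the description of $A_L$ in Theorem \ref{prop:ALmat3} (exactly as in the proof of Theorem \ref{d3split}), $P_L$ has $q^2+q+1$ roots in $\FF_{q^5}$ if and only if $A_L$ is a scalar matrix, which happens precisely when $H_3=H_4=0$ and $H_5\in\Fq$; and $L$ has $q^3$ roots in $\FF_{q^5}$ if and only if $A_L=I$, which happens precisely when $H_3=H_4=0$ and $H_5N(c/d)=1$. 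So everything reduces to two computations: rewriting the system $H_3=H_4=0$ in the stated form, and evaluating $H_5$ on the solution set of that system.

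First I would handle the system. The value $H_3=z^\sigma+z-1-w^\sigma z^{1+\sigma}$ is read directly off the table, so $H_3=0$ is literally the second displayed equation. For $H_4$ I would use the first recursion \eqref{firstrec} with $k=4$ to write $H_4=-H_3-z^{\sigma^2}H_2-w^{\sigma^2}z^{\sigma^2+\sigma}H_1$; substituting $H_1=-1$, $H_2=1-z$, $H_3=0$ and then using that $H_3^\sigma=0$ forces $w^{\sigma^2}z^{\sigma^2+\sigma}=z^{\sigma^2}+z^\sigma-1$, the expression collapses to $H_4=z^{\sigma^2+1}+z^\sigma-1$. Hence, given $H_3=0$, the equation $H_4=0$ is exactly the first displayed equation, and conversely the two displayed equations together give $H_3=H_4=0$.

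The main step is to show that on the set $\{H_3=H_4=0\}$ one has $H_5=-N(z)$. I would start from the second recursion \eqref{secondrec} with $k=5$: since $H_4^\sigma=0$ and $H_3^{\sigma^2}=0$, two of the four terms vanish and we get $H_5=-w^\sigma z^{\sigma+1}(1-z)^{\sigma^3}$, and using $H_3=0$ once more to replace $w^\sigma z^{1+\sigma}$ by $z^\sigma+z-1$ yields $H_5=(1-z-z^\sigma)(1-z)^{\sigma^3}$. Now comes the only delicate part: I would feed the $\sigma$-powers of the first displayed equation $z^{1+\sigma^2}+z^\sigma=1$ into this product. The equation itself gives $1-z-z^\sigma=-z(1-z^{\sigma^2})$; its $\sigma$-shift gives $1-z^{\sigma^2}=z^{\sigma+\sigma^3}$; and its $\sigma^2$-shift gives $1-z^{\sigma^3}=z^{\sigma^2+\sigma^4}$. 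Substituting these in turn, $(1-z-z^\sigma)(1-z^{\sigma^3})=-z\cdot z^{\sigma+\sigma^3}\cdot z^{\sigma^2+\sigma^4}=-z^{1+\sigma+\sigma^2+\sigma^3+\sigma^4}=-N(z)$. The obstacle here is nothing conceptual; it is just keeping straight which $\sigma$-shift of which relation is used where, after which the identity is forced.

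Finally I would assemble the conclusions. Since $N(z)\in\Fq$ by definition of the norm, the extra requirement $H_5\in\Fq$ in the criterion for $P_L$ is automatically satisfied once $H_3=H_4=0$; therefore $P_L$ has $q^2+q+1$ roots in $\FF_{q^5}$ exactly when the two displayed equations hold, as claimed. For the linearized polynomial, the requirement $H_5N(c/d)=1$ becomes $-N(z)N(c/d)=1$, i.e. $N(z)N(c/d)=-1$, which is the third displayed condition; combined with $H_3=H_4=0$ this is exactly the stated criterion for $L$ to have $q^3$ roots in $\FF_{q^5}$.
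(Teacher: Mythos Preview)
Your proposal is correct and follows essentially the same route as the paper: you specialize Theorems \ref{d3split} and \ref{d3split3} to $n=5$, reduce the system $H_3=H_4=0$ to the two displayed equations, and then show $H_5=(1-z-z^\sigma)(1-z)^{\sigma^3}=-N(z)$ on that locus. The paper only sketches these steps (``after some simple algebra'', ``further manipulation''), whereas you spell out the use of the recursions \eqref{firstrec}, \eqref{secondrec} and the $\sigma$-shifts of $z^{1+\sigma^2}+z^\sigma=1$; the argument is the same.
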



\section{Applications}\label{sectapp}

In this section we outline a few topics where our results could
be applied.

\subsection{MRD codes from Linearized Polynomials}

A $\sigma$-linearized polynomial naturally defines an $\Fq$-linear map from $\Fqn$ to itself. Conversely, for every such map there exist a unique $\sigma$-linearized polynomial of degree at most $n-1$ representing it. By Proposition \ref{prop:known}, every nonzero element of $\L_{k-1}$ has rank at least $n-k+1$, and $\L_{k-1}$ has dimension $nk$ over $\Fq$. Such a set of maps is called a {\it maximum rank-distance code (MRD code)} and optimal with respect to dimension for a given minimum rank. This construction is known as a (generalised) Gabidulin code, though the construction with $a^\sigma=a^q$ is due to Delsarte. We refer to \cite{SheekeyMRD} for background.

Corollary \ref{cor:norm} was first proven in \cite[Theorem 10]{GoQu2009a}, and has been used in for example \cite{SheekeyMRD} and \cite{TrZhHughes} to construct new families of MRD codes.

Further constructions have been obtained in the case $n=6,8$ \cite{CsMaPoZa}, \cite{CsMaZu}, again using properties of $\sigma$-linearized polynomials.

In order to improve on these results, or to classify such objects, more exact criteria for the rank of a $\sigma$-linearized polynomial are required. This is one of the motivations for this paper.

\subsection{Hasse-Witt matrices}

Let $C$ be a smooth projective curve of genus $g>0$ that is defined over $\F_q$.
 The Hasse-Witt matrix $H$ represents the action of the Frobenius operator 
on the cohomology group $H^1(C,\mathcal O_C)$ where $\mathcal O_C$ is the structure sheaf.
The Frobenius operator is a semilinear map analogous to $\psi$ in
Proposition \ref{prop:AL}, and its $n$-th power is a linear map 
analogous to $\phi$ in Proposition \ref{prop:AL}. 
Since $H^1(C,\mathcal O_C)$ is $g$-dimensional the Hasse-Witt matrix is a 
$g\times g$ matrix.

The Hasse-Witt invariant of the curve $C$, also known as the $p$-rank of the curve, 
is  the rank of the matrix
 \[
HH^\sigma H^{\sigma^2} \cdots H^{\sigma^{g-1}}.
\] 
 The similarity between this matrix and  our matrix $A_L$ is clear (we have $H$ instead of $C_L$).
 The Hasse-Witt invariant is therefore also equal to
 the rank of the Frobenius operator composed with itself $g$ times.

  There are papers in the literature  that calculate the Hasse-Witt invariant
 for certain types of curves, by finding the rank of the matrix
 $HH^\sigma H^{\sigma^2} \cdots H^{\sigma^{n-1}}$.
 Finding the rank and characteristic polynomial of these matrices
 therefore has applications in algebraic geometry.
 The results and methods of this paper are applicable to any
  curve whose Hasse-Witt matrix has the form of
  $C_L$  in Section \ref{comp}, i.e., the form of a companion matrix.

\subsection{Cryptography}

Recent work on the Discrete Logarithm Problem (DLP)  in finite fields relies on 
a supply of projective polynomials that split completely,
i.e., they have all their roots in the ground field (see \cite{crypto} for details).
The projective polynomials $x^{q+1}+Bx+B$ were used
in setting new world records for the DLP
because of the high number of $B$'s that exist where the 
polynomial splits completely.

These results about $x^{q+1}+Bx+B$
can be seen as a special case of our $d=2$ results in Section \ref{sectd2},
Theorem \ref{thm:PLcriteria2},
which give precise if and only if conditions for a projective polynomial
$P_L(x) = a+bx+cx^{\sigma+1}\in \Fqn[x]$ to split completely.

Moreover, Theorem \ref{d3split}
presents precise if and only if conditions for a projective polynomial
$P_L(x) = a+bx+cx^{\sigma+1}+dx^{\sigma^2+\sigma+1}\in \Fqn[x]$ to split completely.
It is possible that Theorem \ref{d3split} and
these projective polynomials, or a certain subset of these polynomials,  could be used
to speed up attacks on the finite field DLP.

\section{Comment}
Some results of this paper, in particular the second statement of Theorem \ref{thm:splits}, were simultaneously and independently obtained by the authors of \cite{CsMaPoZuNEW}. The results of this paper and those of \cite{CsMaPoZuNEW} were each presented at the conference {\it Combinatorics 2018} on June 7th 2018.

\end{document}